 \newtheorem{theorem}{Theorem}[section]
 \newtheorem{corollary}[theorem]{Corollary}
 \newtheorem{lemma}[theorem]{Lemma}
\newtheorem{observation}[theorem]{Observation}
\theoremstyle{definition}
\newtheorem{definition}[theorem]{Definition}
\theoremstyle{remark}
\newtheorem{fact*}{Fact}
\newtheorem{problem}{Problem}
\DeclareMathOperator{\Topiary}{Topiary}
\DeclareMathOperator{\topiary}{topiary}
\newcommand\dd{\mathrm d}
\newcommand{\til}{\raise.17ex\hbox{$\scriptstyle\mathtt{\sim}$}}
\newcommand\beq{\begin{equation}}
\newcommand\eeq{\end{equation}}
\newcommand{\bbm}{\left[ \begin{smallmatrix}}
\newcommand{\ebm}{\end{smallmatrix} \right]}
\newcommand{\bpm}{\begin{pmatrix}}
\newcommand{\epm}{\end{pmatrix}}
\numberwithin{equation}{section}
\newlength{\Mheight}
\newlength{\cwidth}
\newcommand{\dfn}[1]{{\bf #1}\index{#1}}
\title[Topiarism]{Indices of quadratic programs over reproducing kernel Hilbert spaces for fun and profit}
\author[G. Hutinet]{
Geoffrey Hutinet
}
\email[Geoffrey Hutinet]{ghutinet@haverford.edu}
\author[J. E. Pascoe]{
J. E. Pascoe$^\dagger$
}
\address{Department of Mathematics\\
Drexel University\\
3141 Chestnut St \\
 Philadelphia, PA 19104}
\email[J. E. Pascoe]{james.eldred.pascoe@drexel.edu}
\thanks{$\dagger$ Partially supported by National Science Foundation DMS Analysis Grant 2319010. Thanks to Mathematisches Forschungsinstitut Oberwolfach workshops ``Real Algebraic Geometry with a View toward Koopman Operator Methods" and ``Non-commutative Function Theory and Free Probability."
Part of this research was performed while the author was visiting the Institute for Pure and Applied Mathematics (IPAM), which is supported by the National Science Foundation (Grant No. DMS-1925919).}
\date{\today}
\subjclass[2020]{30C15, 30C80, 47B32, 46E22, 	91G10, 	90C20}
\keywords{modern portfolio theory, topiary, kernel embedding of measures, positive allocation problem, long investment, reproducing kernel Hilbert spaces, geometric function theory, maze solving}
\begin{document}

\begin{abstract}
We give an abstract perspective on quadratic programming with an eye toward long portfolio theory geared toward explaining sparsity via maximum principles.
Specifically, in optimal allocation problems, we see that support of an optimal distribution
lies in a variety intersect a kind of distinguished boundary of a compact subspace to be allocated over.
We demonstrate some of its intelligence by using it to solve mazes and interpret such behavior as the underlying
space trying to understand some hypothetical platonic index for which the capital asset pricing model holds.
\end{abstract}

\maketitle
    
%\tableofcontents
\section{Introduction}
    \emph{The manucript contains animations which may appear static in feature-incomplete pdf viewers.}

    Stock market indices, such as the S\&P 500, Dow Jones Industrial Average, NASDAQ Composite index and the like, are supposed to in some way capture the overall health and behavior of the market.
    The so-called ``Bogleheads" (followers of the philosophy of Jack Bogle \cite{bogle1, bogle2}) often make stronger claims such as one cannot beat such and such an index (such as the S\&P 500) in the long run. The point being that such an allocation reflects the broad allocation 
    of the resources of society, and perhaps that, in principle, the growth in our real resources cannot be too outpaced within the market, if only because the outpacing portion becomes synonymous with the market
    as the residual parts of the market become impossible to resolve by comparison. (Such can be viewed as a sort of \emph{efficient market hypothesis}, see \cite{samuelson}.)

    On the other hand, many proponents of ``value" advocate for a narrow range of extremely well-researched long investments, with a general preference for determinism over speculative large payoffs. (Large speculative payoffs
    are often sought in the arena of ``growth" investing, e.g. \cite{alger}.)
    Advocates of such a philosophy 
    often cite \cite{grahamdodd} as foundational. Here an investment being long means that you actually own something, as opposed to investment based on more exotic financial products,
    such as shorts, options, pet insurance and other derivative products.

    We give some formal justification to the apparently contradictory reasoning--
    our invisible index theorem says that the optimal portfolio will approximate some broad index if possible, but because of geometric features of the space of 
    securities, the optimal portfolio may remain sparse. Moreover, any optimal portfolio behaves somewhat like an index, and all assets that are not used are worse than some capital asset pricing model of that index would require.
    Sparsity, in turn, suggests that investing fundamentally is \dfn{massively multiplayer}-- even if we remove the assets underlying ones optimal portfolio, good approximations to the invisible index may be available using wildly different securities which fit together to make a coherent portfolio is a superficially different way. 

    \subsection{Diversification}

    First, we will review the formal justification for diversification (essentially the arithmetic-geometric mean inequality, that parallel beats sequential, etc.) and the consequent modern portfolio theory.
    \subsubsection{One coin} \label{onecoin}
        %2x 2/5x game
        Consider the following game: one is allowed to bet any amount of money on a single coin flip, if one wins they get back double their bet, otherwise, one gets two fifths their bet back, rounding down any fractional pennies.
        For example, without loss of generality, assume we always pick heads and always bet all of our money. Starting with one dollar we flip the coin ten times and flip the sequence
            $$HTTTTTHHHT$$
        where each $H$ represents a head and $T$ represents a tail.
        We can track how much money we would have in the following table.
        {\tiny \begin{center}
        \begin{tabular}{ c | c c c c c c c c c c c }
             & 0 & 1 & 2 & 3 & 4 & 5 & 6 & 7 & 8 & 9 & 10 \\\hline
             Flip & - & H & T & T & T & T & T & H & H & H & T \\ 
             Value & \$1.00  & \$2.00 & \$0.80 & \$0.32 & \$0.12 & \$0.04 & \$0.01 & \$0.02 & \$0.04 & \$0.08 & \$0.03
        \end{tabular}
        \end{center}}
        Clearly, on such a play we have gotten somewhat unlucky.

        To analyze the game, let us first summarize the possible outcomes of a single flip in a table.
            {\tiny \begin{center}
        \begin{tabular}{ c | c c}
              & H & T\\\hline
             Outcome & 2 & 2/5 
        \end{tabular}
        \end{center}}
        Each outcome has a likelyhood or probability of happening.
        {\tiny \begin{center}
        \begin{tabular}{ c | c c}
              & H & T\\\hline
             Chance & 50\% & 50\% 
        \end{tabular}
        \end{center}}

        On glance, the game seems good. On average, we make $\frac{2 + 2/5}{2} = \frac{6}{5}$ times our money for every bet made, that is the expected value of an individual round 
        where we bet one unit of value is to get an extra fifth of a unit back. Obviously a great deal.

        On the other hand, suppose again that we always go all in and the difference between the number of heads and tails is less than or equal to $10\%.$
        Let $100$ be the number of flips so far. At most we have $55$ heads, and at minimum $45$ tails. Under such rosy assumptions, we will have
                $$2^{55}\left(\frac{2}{5}\right)^{45}=\frac{4^{50}}{5^{45}}= \left(\frac{4}{5^{9/10}}\right)^{50}\approx 0.94^{50} \approx 0.045$$
        units of value for every one put in intitially. (Here the $2^{n+n/10}$ term comes from compounding the wins, and $\left(\frac{2}{5}\right)^{n-n/10}$ from compounding losses.)
        Thus, in the long run with such a strategy, we lose money. From expectation we might have expected $(6/5)^{100} \approx 82,817,974$ units of value. Incongruous.

        One sees parodies of ``growth" in the regime of games as those above-- some may even have speculated that every individual company, no matter how successful, is destined to die.
        The dedicated reader who simulates such a game may observe that it looks like it might have actually been ``growing" during some periods, but 
            also goes to $0.$ One should not follow ghosts whole-heartedly into the abyss-- those that embody the bull in the china shop.
        %Outcome analysis
        %Expectation analysis
        %Long term analysis
        %Good or bad deal?
    \subsubsection{Martingale gambling}
        Suppose there is a game where you can bet any amount and if you win, you double your bet, and on a loss you lose everything. No prior assumptions on the odds
        except that the chance of winning is positive and constant. The martingale strategy supposedly employed by gamblers was to double ones bet on a loss.
        Thus, as the potential outcome of the next game had double the potential upside, they would be accounting of all their lost prior bets with a potential to gain what was to 
        be gained on the first game itself.
        \subsubsection{Going all-in lets the house play a martingale}
        If one continually goes all in, we see the exponential increase in bets that occurs in a martingale strategy. Note that after some point successive bets
        will need to be made with loans.

        That is, companies and banks can play more aggressive, high-risk strategies due to their extreme capacity for leverage.
        Presumably, when one invests in something that itself has debt, although they may have no obligation to pay, they still accept indirect risk due to the prior claim of the bondholders.
        One gauges how successful their use of leverage has been within the context of their overall strategy. As large amounts of debt and leverage are already baked into assets,
        analysis becomes very delicate when one wants to use leverage on top of that.
        %Companies and banks play martingale strategies on small actors by having leverage
    \subsubsection{Two coins}
        %Review 2x 2/5x game, add second coin to set up
        Consider again the following game: one is allowed to bet any amount of money on a single coin flip, if one wins they get back double their bet, otherwise, one gets two fifths their bet back, rounding down any fractional pennies.
        Now assume one can play two games at once, splitting your money between the coins however you like.
         For example, without loss of generality, assume we always pick heads and always bet all of our money.
        We can track how much money we would have in the following table.
        {\tiny \begin{center}
        \begin{tabular}{ c | c c c c c c c c c c c }
             & 0 & 1 & 2 & 3 & 4 & 5 & 6 & 7 & 8 & 9 & 10 \\\hline
             Flip & - & H & T & T & T & T & T & H & H & H & T \\ 
             Flip & - & H & T & T & H & T & T & H & T & H & H \\ 
             Value & \$1.00  & \$2.00 & \$0.80 & \$0.32 & \$0.38 & \$0.14 & \$0.04 & \$0.08 & \$0.04 & \$0.08 & \$0.04
        \end{tabular}
        \end{center}}
        We have added a second coin to our original data in Section \ref{onecoin} and thus inherited some of its bad luck. We have made a penny more though.
        
        To analyze the game, let us first summarize the possible outcomes of a single flip in a table.
            {\tiny \begin{center}
        \begin{tabular}{ c | c c}
             Outcome & H & T\\\hline
            H & 2 & 6/5  \\
            T & 6/5 & 2/5
        \end{tabular}
        \end{center}}
        Each outcome has a likelyhood or probability of happening.
        {\tiny \begin{center}
        \begin{tabular}{ c | c c}
            Probability & H & T\\\hline
            H & 25\% & 25\%  \\
            T & 25\% & 25\%
        \end{tabular}
        \end{center}}

        On glance, the game seems good but similar in ruin to the first game as we only got a penny more on the small simulation above. On average, we make $\frac{2 +6/5 + 6/5 + 2/5}{4} = \frac{6}{5}$ times our money for every bet made, that is the expected value of an individual round 
        where we bet one unit of value is to get an extra fifth of a unit back. Still obviously a great deal.

        On the other hand, suppose again that we always go all in.
        Let $100$ be the number of flips so far, assume we have flipped 27 double tails, 50 mixed heads and tails and 23 double heads, a somewhat average outcome. We will have
                $$2^{23}\left(\frac{2}{5}\right)^{27} \left(\frac{6}{5}\right)^{50} \approx 1.38$$
        units of value at the end.
        %Outcome analysis
        %Expectation analysis
        %Long term analysis
        %Good or bad deal?
    \subsubsection{Magic coins}
        %2x 2/5x game, add magic coin to set up
        Consider again the following game: one is allowed to bet any amount of money on a single coin flip, if one wins they get back double their bet, otherwise, one gets two fifths their bet back, rounding down any fractional pennies.
        Now assume one can play two games at once, splitting your money between the coins however you like. Moreover, these coins are enchanted so that they always flip opposite outcomes.
         For example, without loss of generality, assume we always pick heads and always bet all of our money. 

        We can track how much money we would have in the following table.
        {\tiny \begin{center}
        \begin{tabular}{ c | c c c c c c c c c c c }
             & 0 & 1 & 2 & 3 & 4 & 5 & 6 & 7 & 8 & 9 & 10 \\\hline
             Flip & - & H & T & T & T & T & T & H & H & H & T \\ 
             Flip & - & T & H & H & H & H & H & T & T & T & H \\ 
             Value & \$1.00  & \$1.20 & \$1.44 & \$1.72 & \$2.06 & \$2.47 & \$2.96 & \$3.55 & \$4.26 & \$5.11 & \$6.13
        \end{tabular}
        \end{center}}
        We took our original unlucky sequence from Section \ref{onecoin} and turned it into quite a good one.

        To analyze the game, let us first summarize the possible outcomes of a single flip in a table.
            {\tiny \begin{center}
        \begin{tabular}{ c | c c}
             Outcome & H & T\\\hline
            H & 2 & 6/5  \\
            T & 6/5 & 2/5
        \end{tabular}
        \end{center}}
        Each outcome has a likelyhood or probability of happening.
        {\tiny \begin{center}
        \begin{tabular}{ c | c c}
            Probability & H & T\\\hline
            H & 0\% & 50\%  \\
            T & 50\% & 0\%
        \end{tabular}
        \end{center}}
        The outcome in terms of our value is deterministic. We earn a fifth of our money each time we play.
        Thus, by combining two anti-correlated coins, we have created a riskless game.

        For general configurations of coins, see the theory of Kelly strategies \cite{kelly1956}.

    \subsection{Modern portolio theory}
        Given a vector of average returns $\psi_i$ with covariances $\rho_{ij}$
        modern portfolio theory suggests that we maximize
            $$\sum \omega_i \psi_i - \frac{1}{2}\sum \omega_i\omega_j \rho_{ij}$$ 
        where $\omega_i$ sum to $1.$
        If one assumes that the underlying securities grow as geometric Brownian motions,
        the maximum corresponds to the portfolio with maximum asymptotic median growth rate.
        Namely, geometric Brownian motion satisfies the stochastic differential equation:
            $$\frac{dS}{S} = m \dd t + \sigma dB_t.$$
        At time $t,$
            $$S = e^{(m-\sigma^2/2)t + \sigma B_t}$$
        where $B_t$ is a Brownian motion which grows like $\sqrt{t}.$
        Thus, to find the optimal portfolio for long term median growth, one sees the need to maximize $m - \sigma^2/2$ which is given explicitly by the formula above.
        
        The following principle explains the reward for diversification as a reward for risk management (or generation determinism [for lower bounds]):        
        \emph{In terms of long term median growth, given a basket of securities with approximately equal returns, one is stochastically paid for destroying variance at a rate on one variance destroyed gives one half unit of median return.}

        The \dfn{capital asset pricing model} states that
            $$\psi_i-r = \beta_i (\psi_M -r)$$
        where $\psi_M$ is the return of the market portfolio (often modelled by an index such as the S\&P 500) and $r$ represents the return on a risk-free asset (usually modelled by something like a treasury bill,)
        and $\beta_i$ is the covariance of the market portfolio with asset $i$ divided by the variance of the market portfolio. The supposed point being that to beat the market one needs to choose stocks with more market risk, in the sense that their covariance with the market portfolio is higher than the variance of the market portfolio.
        Fama-French type models add extra terms to better fit data-- in principle, these are factors affecting the return that cannot be captured by mean-variance analysis.
        See \cite{francis2013modern} for futher overview.

        The approach to capital asset pricing modeling does not take into account an important phenomenon-- that a portfolio must cohere. When one selects an outfit, some things do not work together-- if you need to add some
        particular accessory, one may need to remove other articles to have the ensemble make sense. (Where harmony may be thought of as approximating the magic coins above.)
        We rectify such by dealing with coherent fragments of the market directly, which will be our topiaric index theory.
    \subsection{The maximum principle}
        The maximum principle states that in a steady state of a diffusion type process, the maximum concentration must occur on the boundary.
        (That is, if there were a concentration maximum at some interior point, then it would have diffused to other points in the interior.)
        Such occur in general in complex variables and functional analysis, where 
        it is often called the distinguished or Shilov boundary. We shall define an appropriate frontier for which the maximum principle holds in quadratic programming.
    \subsection{Main results}
        We establish that optimal portfolios must be chosen from some potentially small set we call the green frontier in our \dfn{green topiary theorem}. We also show that if in some expanded universe there
        is a totally diversified index fund as the optimal portfolio, then finding the optimal portfolio in a restricted class of assets is merely trying to approximate the fluctuations of the 
        index, which we call the \dfn{invisible index theorem}. In general, we treat portfolios which are internally consistent and assets which are inconsistent with such must perform worse
        than the capital asset pricing model would predict,
        which we call the \dfn{capital asset pricing inequality}. 
        
        %Outcome analysis
        %Expectation analysis
        %Long term analysis
        %Good or bad deal?
    %Two coins
    %Geometric Brownian motion    
    %Markowitz development
    %Fama-French
    %Efficient markets
    %Harmonic functions as steady states of diffusion processes
        %the maximum principle
        %Claim something like if nothing is "really happening" static behavior will be harmonic/value-diffusing
    %Our results
        %Coherent markets/topiaric indices
        %Capital asset pricing inequality... incoherent part underperforms!!!
        %Invisible indices
            %Model where funds estimate this
        
\section{Preliminaries on Hilbert spaces} 
    We now recall some notions from the theory of Hilbert spaces.
   
    Let $\mathcal{H}$ be a real vector space.
    Call $\langle\cdot ,\cdot \rangle: \mathcal{H}\times \mathcal{H} \rightarrow \mathbb{R}$ an \dfn{inner product} if
        \begin{enumerate}
            \item $\langle x, x \rangle \geq 0,$  
            \item If $\langle x, x \rangle = 0,$ then $x=0,$
            \item $\langle x, y \rangle = \langle y, x \rangle,$
            \item $\langle \alpha x+\beta z, y \rangle = \alpha\langle x, y\rangle + \beta\langle z, y\rangle.$
        \end{enumerate}
    We call $\|x\| = \sqrt{\langle x, x \rangle}$ the \dfn{norm}.

    We a vector space with an inner product a \dfn{real Hilbert space} if whenever $\sum^{\infty}_{n=1} \|x_n\|$ converges
    then $\sum^{\infty}_{n=1} x_n$ converges.

    Examples include:
    \begin{enumerate}
        \item $\mathbb{R}^n$, where $\langle(x_1,\ldots,x_n), (y_1,\ldots,y_n)\rangle = \sum x_iy_i,$ (Euclidean space)
        \item If $X, Y$ are random variables with finite variance, $\langle X, Y\rangle = Cov(X,Y),$
        \item $\mathbb{R}^n$, $A$ a positive semidefinite matrix, where $\langle x, y\rangle = y^TAx,$ 
        \item The set of square summable sequences where $\langle(x_1, x_2, \ldots), (y_1, y_2, \ldots)\rangle = \sum x_iy_i,$
        called $\ell^2.$
        \item The set of functions on the unit interval such that $\int^1_0 |f|^2 \dd x < \infty.$
        where $\langle x,y \rangle = \int^1_0 fg \dd x.$ (Called $L^2.$)
    \end{enumerate}

    Say $x \perp y$ or $x$ is \dfn{perpendicular} to $y$ if $\langle x, y\rangle =0.$
    Hilbert spaces satisfy the Pythagorean theorem, where it is obtained via algebra in the following classic proof. 
   \begin{theorem}[The Pythagorean theorem]
        $x \perp y$ if and only if $$\|x\|^2+\|y\|^2 = \|x+y\|^2.$$
    \end{theorem}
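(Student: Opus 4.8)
The plan is to prove both directions simultaneously by expanding the quantity $\|x+y\|^2$ directly from the definition of the norm, $\|v\|^2 = \langle v, v\rangle$, and reducing the claimed identity to the single scalar condition $\langle x, y\rangle = 0$. This converts the geometric statement about perpendicularity into a purely algebraic one handled by the inner product axioms, exactly as the remark preceding the statement advertises.

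First I would write $\|x+y\|^2 = \langle x+y, x+y\rangle$ and apply the bilinearity axiom (property 4) in the first slot, then a second time, to split the expression into four inner products:
\[
\langle x+y, x+y\rangle = \langle x, x\rangle + \langle x, y\rangle + \langle y, x\rangle + \langle y, y\rangle.
\]
Next I would invoke the symmetry axiom (property 3), namely $\langle x, y\rangle = \langle y, x\rangle$, to combine the two cross terms into one, yielding the single identity
\[
\|x+y\|^2 = \|x\|^2 + \|y\|^2 + 2\langle x, y\rangle.
\]

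From this identity both implications fall out at once. The equation $\|x\|^2 + \|y\|^2 = \|x+y\|^2$ holds precisely when the residual term $2\langle x, y\rangle$ vanishes, which by definition is exactly the condition $x \perp y$. Thus the forward and reverse directions are merely the two readings of the same equivalence, and no separate argument is required for either one.

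Honestly, there is no genuine obstacle here: the entire content is carried by the bilinearity and symmetry of the inner product, so the proof is just the algebraic manipulation promised in the lead-in. The only point deserving a moment of care is to apply bilinearity in \emph{both} arguments — the axioms as stated give linearity in the first slot, and linearity in the second is recovered through symmetry — so that all four terms are accounted for before the cross terms are merged.
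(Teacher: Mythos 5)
Your proposal is correct and follows essentially the same route as the paper: expand $\|x+y\|^2$ by bilinearity and symmetry to get $\|x\|^2+\|y\|^2+2\langle x,y\rangle$ and read off the equivalence. If anything, your version is slightly more complete, since the paper's displayed chain of equalities silently inserts $2\langle x,y\rangle=0$ and so only exhibits one direction, whereas you make the biconditional explicit.
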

    \begin{proof}
        \begin{align*}
            \|x\|^2+\|y\|^2 &= \langle x, x \rangle + \langle y, y \rangle \\
            &= \langle x, x \rangle + 2\langle x, y \rangle + \langle y, y \rangle \\
            &= \langle x+y, x+y \rangle = \|x+y\|^2
        \end{align*}
    \end{proof}
    Because of the relationship to coavariance, perpendicularity can help capture the notion of statistical independence.
    Another important fact is the following inequality.
        \begin{theorem}[Cauchy-Schwarz inequality]
            $\langle x, y\rangle \leq \|x\|\|y\|.$
        \end{theorem}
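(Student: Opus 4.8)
The plan is to reduce the inequality to the nonnegativity of a single squared norm, exploiting the Pythagorean theorem just established. First I would dispose of the degenerate case: if $\|y\| = 0$ then $y = 0$ by inner product axiom (2), so both sides vanish and the inequality holds trivially. Hence I may assume $\|y\| > 0$ throughout the main argument.

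For the main case the idea is to subtract from $x$ its component along $y$, i.e.\ to split $x$ into a piece parallel to $y$ and a piece perpendicular to it. Define the residual
$$z = x - \frac{\langle x, y\rangle}{\|y\|^2}\, y.$$
A one-line computation using bilinearity shows $\langle z, y\rangle = \langle x, y\rangle - \frac{\langle x, y\rangle}{\|y\|^2}\langle y, y\rangle = 0$, so $z \perp y$. Writing $x = z + \frac{\langle x, y\rangle}{\|y\|^2}\,y$ as a sum of perpendicular vectors, the Pythagorean theorem gives
$$\|x\|^2 = \|z\|^2 + \frac{\langle x, y\rangle^2}{\|y\|^2}.$$
Since $\|z\|^2 \geq 0$, dropping that term yields $\langle x, y\rangle^2 \leq \|x\|^2 \|y\|^2$, and taking square roots (then noting $\langle x, y\rangle \leq |\langle x, y\rangle|$) delivers the stated claim.

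An alternative I would keep in reserve is the discriminant method: the quadratic $p(t) = \|x - ty\|^2 = \|x\|^2 - 2t\langle x, y\rangle + t^2\|y\|^2$ is nonnegative for every real $t$, so its discriminant must be nonpositive, which is exactly $4\langle x, y\rangle^2 - 4\|x\|^2\|y\|^2 \leq 0$. This route avoids naming the projection coefficient explicitly, at the cost of not reusing the Pythagorean theorem; since the excerpt has just emphasized that Pythagoras is obtained by pure algebra, I prefer the projection proof as the more thematically consistent one.

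I do not expect a genuine obstacle here. The only points requiring care are the degenerate case $y = 0$ and the remark that the inequality as stated is the weaker signed version, so that the passage from $\langle x, y\rangle^2 \leq \|x\|^2\|y\|^2$ to $\langle x, y\rangle \leq \|x\|\|y\|$ is immediate. The sole ``magic'' step is the choice of the coefficient $\langle x, y\rangle / \|y\|^2$ — equivalently the vertex $t = \langle x, y\rangle/\|y\|^2$ of the quadratic $p(t)$ — which I would motivate as precisely the value rendering $z$ perpendicular to $y$, that is, the orthogonal projection of $x$ onto the line through $y$.
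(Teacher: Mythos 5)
The paper states the Cauchy--Schwarz inequality without any proof, so there is no argument of the authors' to compare yours against; what matters is only whether your proof stands on its own, and it does. The degenerate case $y=0$ is handled correctly, the coefficient $\langle x,y\rangle/\|y\|^2$ does make the residual $z$ perpendicular to $y$, and the application of the Pythagorean theorem (you only need the forward direction, $x\perp y \Rightarrow \|x+y\|^2=\|x\|^2+\|y\|^2$, which is what the paper's preceding computation actually establishes) gives $\|x\|^2 = \|z\|^2 + \langle x,y\rangle^2/\|y\|^2$, from which $\langle x,y\rangle^2 \le \|x\|^2\|y\|^2$ and then the stated signed inequality follow immediately. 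Your choice of the projection route over the discriminant route is also well judged in context: it reuses the Pythagorean theorem the paper has just proved ``via algebra,'' and the orthogonal decomposition $x = z + \frac{\langle x,y\rangle}{\|y\|^2}y$ foreshadows the covariance/statistical-independence interpretation of perpendicularity that the paper invokes right after this theorem. No gaps.
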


    The following will be very powerful for us.
    \begin{theorem}[The Riesz representation theorem]
        Let $\mathcal{H}$ be a Hilbert space.
        If $\lambda:\mathcal{H}\rightarrow \mathbb{R}$ is bounded and linear, that is $|\lambda(x)|\leq C\|x\|,$
        then there exists a unique $\tilde{\lambda}\in \mathcal{H}$ such that 
            $$\lambda(x) = \langle x, \tilde{\lambda}\rangle.$$
    \end{theorem}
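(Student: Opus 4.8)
The plan is to reduce the statement to the geometry of orthogonal projection onto the kernel of $\lambda$. First I would dispose of the trivial case: if $\lambda$ is identically zero, then $\tilde\lambda = 0$ represents it. So assume $\lambda \neq 0$ and set $N = \{x : \lambda(x) = 0\}$. Because $\lambda$ is bounded, hence continuous, $N$ is a \emph{closed} proper subspace of $\mathcal{H}$; this closedness is what will let completeness do its work.

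The heart of the argument is to produce a single nonzero vector $z$ perpendicular to all of $N$. Granting such a $z$, the rest is the same expand-the-inner-product algebra already modeled by the proof of the Pythagorean theorem: for arbitrary $x \in \mathcal{H}$, the vector $u = \lambda(x)\, z - \lambda(z)\, x$ satisfies $\lambda(u) = \lambda(x)\lambda(z) - \lambda(z)\lambda(x) = 0$, so $u \in N$ and therefore $\langle u, z\rangle = 0$. This reads $\lambda(x)\|z\|^2 = \lambda(z)\langle x, z\rangle$, so that $\tilde\lambda = \bigl(\lambda(z)/\|z\|^2\bigr) z$ represents $\lambda$. Uniqueness then follows immediately: if $\tilde\lambda_1$ and $\tilde\lambda_2$ both represent $\lambda$, then $\langle x, \tilde\lambda_1 - \tilde\lambda_2\rangle = 0$ for every $x$, and taking $x = \tilde\lambda_1 - \tilde\lambda_2$ forces $\|\tilde\lambda_1 - \tilde\lambda_2\|^2 = 0$, hence $\tilde\lambda_1 = \tilde\lambda_2$.

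The hard part is the existence of the perpendicular vector $z$, and this is exactly where completeness of $\mathcal{H}$ must enter. I would establish it via a closest-point (projection) argument: fixing any $w \notin N$, I minimize $\|w - n\|$ over $n \in N$ and show the infimum $d$ is attained. The key estimate is the parallelogram law $\|a+b\|^2 + \|a-b\|^2 = 2\|a\|^2 + 2\|b\|^2$, provable by the same algebraic computation used for Pythagoras; applied to the midpoints of a minimizing sequence it forces that sequence to be Cauchy, and the absolute-convergence form of completeness in the paper's definition then supplies a limit $n_0 \in N$ (using that $N$ is closed). Setting $z = w - n_0$, minimality gives $\|z - t n\|^2 \geq \|z\|^2$ for every $n \in N$ and every real $t$; expanding yields $-2t\langle z, n\rangle + t^2\|n\|^2 \geq 0$ for all $t$, which is possible only if $\langle z, n\rangle = 0$. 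Since $w \notin N$ we have $z \neq 0$, as required.

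I expect the projection step to be the only genuinely delicate point: it is where the parallelogram law, the closedness of $N$, and completeness all combine. Once $z$ is in hand, the representation formula and its uniqueness are pure algebra requiring nothing beyond the inner product axioms already recorded.
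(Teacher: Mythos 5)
Your proof is correct, and it is worth noting up front that the paper offers no proof of this statement at all: the Riesz representation theorem is recorded in the preliminaries as classical background (with \cite{paulsen2016introduction} as the ambient reference), so there is nothing internal to compare against. Your argument is the standard one --- project onto the closed kernel $N=\ker\lambda$, extract a nonzero $z\perp N$, and then the identity $\lambda(x)\|z\|^2=\lambda(z)\langle x,z\rangle$ plus the trivial uniqueness computation finish it --- and every step checks out: the parallelogram-law estimate does force a minimizing sequence in $N$ to be Cauchy, the variational inequality $-2t\langle z,n\rangle+t^2\|n\|^2\ge 0$ for all real $t$ does force orthogonality, and the reduction of the representation formula to $u=\lambda(x)z-\lambda(z)x\in N$ is exactly right. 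The one place where you are right to flag a small bridge is the paper's nonstandard phrasing of completeness (absolutely convergent series converge, rather than Cauchy sequences converge): to use it you should extract from your Cauchy minimizing sequence a subsequence $(y_k)$ with $\|y_{k+1}-y_k\|\le 2^{-k}$, observe that $\sum\|y_{k+1}-y_k\|$ converges so $\sum(y_{k+1}-y_k)$ converges, and conclude the subsequence (hence the Cauchy sequence) has a limit, which lies in $N$ by closedness. With that sentence added the proof is complete.
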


    \begin{definition}
        A \dfn{real reproducing kernel Hilbert space} $\mathcal{H}$ is a Hilbert space of real-valued functions on some domain $\Omega$
        such that point evaluation at each $\alpha \in \Omega$ is a bounded linear functional. 
        
        We call the element realizing this via the Riesz representation theorem $k_\alpha.$ That is,
            $$f(\alpha)=\langle f, k_\alpha\rangle.$$
    \end{definition}
    See \cite{paulsen2016introduction} for reference.

    Examples include:
    \begin{enumerate}
        \item Euclidean space, where we view vectors as functions on $\{1,\ldots, n\},$
        \item $\ell^2,$
        \item NOT $L^2$ of $[0,1]$ (such functions are merely defined almost everywhere, that is point evaluations are not defined)
        \item The space of functions on the complex unit disk of the form
            $\sum^{\infty}_{-\infty} c_n z^n$
        where $\sum |c_n|^2 < \infty$ and $c_n = \overline{c_{-n}}.$
        Here,
                $$k_w(z) = \textrm{Re } \frac{1+z\overline{w}}{1-z\overline{w}}.$$
        Called the \dfn{real Hardy space} of harmonic functions on the disk.
    \end{enumerate}
    
    \begin{definition}
        Let $\mathcal{H}$ be real reproducing kernel Hilbert space on some domain $\Omega.$
        Let $\mu$ be a compactly supported distribution on $\Omega.$
        We associate the \dfn{embedded distribution} $\mu \in \mathcal{H}$
        via the Riesz representation theorem of the functional given by integration against $\mu.$
        That is,
            $$\lambda_\mu(f)=\int f \dd \mu = \langle f, \mu\rangle.$$
        (We abuse notation to treat these two objects the same.)
        
        We call the map taking a measure to its corresponding Hilbert space element the \dfn{kernel embedding of distributions}.

        We define the \dfn{space of embedded measures} to be the weak closure of the image of the kernel embedding of distributions.
    \end{definition}
    For a finite sum of point masses $\nu= \sum \alpha_i\delta_{x_i}$ we view $\nu = \sum \alpha_i k_{x_i}$ and $$\langle f,\nu\rangle=\sum \alpha_i f(x_i).$$

    See \cite{smolka} for information on kernel embeddings and learning applications. Our results explain sparsity (thus fast to evaluate) in the positive analog of support vector machines.
    (Instead of doing the job of separating two sets, instead one can think of this as wanting to find the outline of a set.)
\section{Topiarism}

    Our basic problem of study is as follows.

    \begin{definition}
        Let $\mathcal{H}$ be real reproducing kernel Hilbert space on some domain $\Omega.$
        Let $\psi$ be a continuous function on $\Omega.$
        Let $K \subseteq \Omega$ be compact.
        We define the \dfn{aesthetic objective} to be 
            $$\mathfrak{O}(\mu)=\int \psi \dd \mu - \|\mu\|^2/2.$$
        We call the maximizer of $\mathfrak{O}$ over all distributions \dfn{the topiary of $K$ with respect to $\psi$ over $\mathcal{H},$}
        denoted $\topiary(K).$
    \end{definition}
    The minimizer may not be unique as a measure, but the embedded measure is unique.
    When $\psi + r \in \mathcal{H}$ for some $r \in \mathbb{R},$ we have that maximizing the aesthetic objective is equivalent to
    minimizing $\|\psi + r - \mu \|.$
    
    That is,
        minimizing
            $$\|\psi -\mu \|^2 = \|\psi\|^2-2\langle \psi, \mu \rangle+\|\mu\|^2,$$
    is equivalent to maximizing
        $$\mathcal{O}(\mu)=\int \psi \dd \mu - \|\mu\|^2/2$$
    as $\psi$ is fixed.
    That is, from the closest measure perspective, the latter aesthetic formulation we have adopted need not require $\psi \in \mathcal{H}.$

    An important example is the problem of optimization of long portfolios (and other operations research problems) following
    Markowitz modern portfolio theory \cite{francis2013modern}. (The asymptotic growth rate of a geometric Brownian motion is equal to its mean minus its variance over two.)
    The name topiary is chosen in relation to the financial concept of hedging.

    \begin{lemma}
        Let $\mathcal{H}$ be real reproducing kernel Hilbert space on some domain $\Omega.$
        Let $\psi$ be a continuous function on $\Omega.$
        Let $K \subseteq \Omega$ be compact.
        The aesthetic objective satisfies
            $$D\mathfrak{O}(\mu)[\delta_x-\mu] = \psi(x)-\mu(x) - \int \psi(t)-\mu(t) \dd \mu(t).$$

        Thus, we call $$\iota_{\mu} = \psi - \mu - \int \psi(t)-\mu(t) \dd \mu(t)$$ the \dfn{aesthetic margin}.
        We define the score $\mathfrak{S}(\mu)$ of $\mu$ to be the supremum of the aesthetic margin.
    \end{lemma}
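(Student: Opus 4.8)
The plan is to read $D\mathfrak{O}(\mu)[\delta_x - \mu]$ as a one-sided Gâteaux derivative along the line segment joining $\mu$ to the point mass $\delta_x$. Concretely, set
$$\mu_\eps = (1-\eps)\mu + \eps\,\delta_x = \mu + \eps(\delta_x - \mu), \qquad \eps \in [0,1].$$
Since $\delta_x$ and $\mu$ are both distributions and the segment stays in the probability simplex (the total-mass condition $\int \dd(\delta_x - \mu) = 0$ is preserved), each $\mu_\eps$ is an admissible competitor and $\delta_x - \mu$ is the corresponding admissible tangent direction; the point masses are exactly the extreme points toward which one perturbs in a first-order optimality analysis. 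I would then compute
$$D\mathfrak{O}(\mu)[\delta_x - \mu] = \frac{d}{d\eps}\Big|_{\eps=0} \mathfrak{O}(\mu_\eps)$$
term by term.

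For the linear term $\int \psi \, \dd\mu_\eps$, linearity in the measure gives $\int \psi\,\dd\mu + \eps\big(\psi(x) - \int\psi\,\dd\mu\big)$, whose $\eps$-derivative at $0$ is $\psi(x) - \int\psi\,\dd\mu$; this step is valid even though $\psi$ need not lie in $\mathcal{H}$, since here $\psi$ is genuinely integrated as a continuous function against a compactly supported distribution. For the quadratic term I expand in the Hilbert space,
$$-\tfrac12\|\mu + \eps(\delta_x - \mu)\|^2 = -\tfrac12\Big(\|\mu\|^2 + 2\eps\langle\mu, \delta_x - \mu\rangle + \eps^2\|\delta_x - \mu\|^2\Big),$$
so its derivative at $\eps = 0$ is $-\langle \mu, \delta_x - \mu\rangle = -\langle\mu, k_x\rangle + \|\mu\|^2$.

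The last step translates the inner products back into the function/integral language of the statement using the two reproducing-kernel identities $\langle\mu, k_x\rangle = \mu(x)$ (the reproducing property applied to the embedded element $\mu$) and $\|\mu\|^2 = \langle\mu,\mu\rangle = \int \mu(t)\,\dd\mu(t)$ (the defining property $\langle f,\mu\rangle = \int f\,\dd\mu$ taken with $f = \mu \in \mathcal{H}$). Substituting and regrouping $-\int\psi\,\dd\mu + \int\mu\,\dd\mu = -\int(\psi(t)-\mu(t))\,\dd\mu(t)$ yields exactly
$$\psi(x) - \mu(x) - \int \psi(t) - \mu(t)\,\dd\mu(t),$$
as claimed. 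I expect no deep obstacle: the dependence on $\eps$ is an explicit quadratic polynomial, so differentiating under the pairing is immediate. The only point requiring genuine care is the dual bookkeeping in which $\mu$ denotes simultaneously a measure and its embedded Hilbert-space function, so that one must read $\mu(x)$ as a point value of the embedded function and $\int\mu\,\dd\mu$ as the squared norm; keeping these two roles straight is the whole content of the verification.
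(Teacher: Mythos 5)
Your computation is correct and is exactly the routine Gâteaux-derivative verification that the paper leaves implicit (the lemma is stated without proof): linearity of the first term, expansion of the squared norm, and the translation $\langle\mu,k_x\rangle=\mu(x)$, $\|\mu\|^{2}=\int\mu\,\dd\mu$ all check out and regroup to the stated formula. Your closing remark correctly identifies the only subtle point, namely the paper's deliberate conflation of a distribution with its kernel embedding.
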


    \begin{theorem}
        Let $\mathcal{H}$ be real reproducing kernel Hilbert space on some domain $\Omega.$
        Let $\psi$ be a continuous function on $\Omega.$
        Let $K \subseteq \Omega$ be compact.
        The aesthetic margin of $\topiary(K)$ is $0$ on its support and nonpositive on $K.$
        We call $\int \psi(t)-\mu(t) \dd \mu(t)$ the \dfn{available topiaric rate}, denoted $r_K.$

        We define the \dfn{topiaric index} of $K$, denoted $\Topiary(K),$ to be the preimage of zero for the aesthetic margin
        of the topiary. That is, $$\Topiary(K) =\iota^{-1}_{\topiary(K)}(0).$$
        We call any set arising as the topiaric index of some $\mathcal{K}$ a \dfn{topiaric index}.
    \end{theorem}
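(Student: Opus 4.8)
\emph{The plan is to} read the assertion as the first-order (Karush--Kuhn--Tucker) optimality condition for a concave maximization over a convex feasible set, using the directional-derivative formula already established in the preceding Lemma. Write $\mu^\ast = \topiary(K)$ for the optimal probability measure on $K$, so that the feasible set consists of the nonnegative mass-one distributions supported on $K$. First I would record the two structural facts that make everything run: this feasible set is convex, and the aesthetic objective $\mathfrak{O}(\mu) = \int \psi \, d\mu - \|\mu\|^2/2$ is concave, since $\mu \mapsto \int \psi \, d\mu$ is affine and $\mu \mapsto -\|\mu\|^2/2$ is concave under the linear kernel embedding of distributions.

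For the nonpositivity on $K$, fix any $x \in K$ and consider the segment $\mu_t = (1-t)\mu^\ast + t\,\delta_x$, which is again a probability measure on $K$ for $t \in [0,1]$. Since $\mu^\ast$ maximizes $\mathfrak{O}$, we have $\mathfrak{O}(\mu_t) \le \mathfrak{O}(\mu^\ast)$, so dividing by $t$ and letting $t \to 0^+$ forces the one-sided derivative to be nonpositive; by the Lemma this derivative is exactly $D\mathfrak{O}(\mu^\ast)[\delta_x - \mu^\ast] = \iota_{\mu^\ast}(x)$. Hence $\iota_{\mu^\ast}(x) \le 0$ for every $x \in K$.

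For the vanishing on the support, I would integrate the aesthetic margin against $\mu^\ast$ itself. Using $\int d\mu^\ast = 1$ together with the definition $r_K = \int (\psi - \mu^\ast)\, d\mu^\ast$, one computes directly that $\int \iota_{\mu^\ast}\, d\mu^\ast = \int (\psi - \mu^\ast)\, d\mu^\ast - r_K = 0$. Thus $\iota_{\mu^\ast}$ is nonpositive on $K \supseteq \operatorname{supp}(\mu^\ast)$ yet integrates to zero against $\mu^\ast$, and the usual complementary-slackness dichotomy applies: if $\iota_{\mu^\ast}(x_0) < 0$ at a point $x_0 \in \operatorname{supp}(\mu^\ast)$, continuity gives $\iota_{\mu^\ast} < 0$ on a neighborhood of $x_0$, which has positive $\mu^\ast$-mass, contradicting the vanishing integral. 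Therefore $\iota_{\mu^\ast} \equiv 0$ on $\operatorname{supp}(\mu^\ast)$.

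The hard part will not be the optimality bookkeeping but the regularity needed to invoke continuity in the last step: one must ensure that $x \mapsto \mu^\ast(x) = \langle \mu^\ast, k_x\rangle$ is continuous, which amounts to continuity of the kernel map $x \mapsto k_x$, so that $\iota_{\mu^\ast}$ is genuinely continuous and the support argument closes. One should also confirm that the maximizer is attained, so that $\mu^\ast$ exists in the first place (a weak-compactness argument over probability measures on the compact set $K$). Granting the differentiation formula from the Lemma and these two regularity points, the remainder is the clean first-order dichotomy above.
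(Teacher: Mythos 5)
Your proof is correct, and the first half coincides with the paper's: the paper's entire argument for nonpositivity on $K$ is the one-line assertion that ``the aesthetic margin must be nonpositive at an optimum,'' which is precisely your one-sided derivative computation along $\mu_t=(1-t)\mu^\ast+t\delta_x$ using the preceding Lemma. Where you genuinely diverge is in the vanishing on the support. The paper argues by direct perturbation: if $\iota_{\mu^\ast}<0$ at a point of the support, rescale the measure down on a neighborhood of that point and up elsewhere to increase the objective. You instead observe the complementary-slackness identity $\int \iota_{\mu^\ast}\,\dd\mu^\ast = r_K - r_K\cdot\mu^\ast(K)=0$ (valid because $\mu^\ast$ has total mass one) and conclude that a continuous nonpositive function with vanishing integral against $\mu^\ast$ must vanish on $\operatorname{supp}(\mu^\ast)$. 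Your route is cleaner: the paper's redistribution sketch, made precise, requires checking that the gain from removing mass near the bad point is not offset by the loss from adding mass elsewhere, which is exactly what the zero-integral identity packages automatically. The regularity caveats you flag are real and are silently assumed by the paper --- continuity of $x\mapsto k_x$ (needed so that $\iota_{\mu^\ast}$ is continuous and the support argument closes) and existence of the maximizer via weak compactness of probability measures on the compact set $K$ together with upper semicontinuity of $\mathfrak{O}$ --- so it is to your credit that you isolate them rather than bury them. The only cosmetic remark is that the concavity of $\mathfrak{O}$ and convexity of the feasible set, while true and useful elsewhere in the paper, are not actually needed for this theorem, which only uses first-order necessity.
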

    \begin{proof}
        The aesthetic margin must be be nonpositive at an optimum. If it were negative at a point its support, one would be able to scale up outside of a neighborhood of the point and scale down within a neighborhood of the point to to increase the aesthetic objective.
    \end{proof}
    Call a set of the form $\{x|\psi(x) - f(x) = C\}$ for some $f\in \mathcal{H}, C \in \mathbb{R}$ a \dfn{marginal hypersurface}.
    We see the following principle: \emph{A topiaric index for a set $K$ lies on a marginal hypersurface.}

    \section{The capital asset pricing inequality}

    The connection to nonpositive functions allows one to interface with Herglotz type theory as was done in \cite{jepkm} and replicate approaches to the Julia-Caratheodory theory \cite{ju20, car29, wol26}
    as in \cite{pascoe2021controlled, pascoePEMS} following the work of Agler, McCarthy and Young \cite{amy10a}, Agler, Tully-Doyle and Young \cite{aty12} and Agler and McCarthy \cite{amchvms}
    which give examples where regularity of a function forces the regularity of some underlying Agler kernels.
    In complex variables, we may also thus have connections to the theory of stable polynomials, their geometry,
    and the relation to complex function theory as in \cite{bps1, bps2, ams06, agmc_dv}.
    For a general introduction to advanced kernel methods and geometry beyond that of basic repoducing kernel Hilbert spaces, see \cite{ampi}.
    The point being that with an appropriate view, one should see a theory relating the behavior on the topiaric index of $K$ to behavior on the rest of $K.$
    A concrete problem would be to understand the Fama-French theory (or general arbitrage pricing theory, although the von Neumann elephant trunk-wiggling phenomenon there probably obfuscates the matter) in terms of the capital asset pricing inequality below.

    In the case of portfolio optimization, if there is a risk free asset in the optimal portfolio, the available topiaric rate is equal to the risk free rate.
    \begin{theorem}[Capital asset pricing inequality]
        Let $\mathcal{H}$ be real reproducing kernel Hilbert space on some domain $\Omega.$
        Let $\psi$ be a continuous function on $\Omega.$
        Let $K \subseteq \Omega$ be compact.
        Let $\mu\neq 0$ be the topiary of $K.$
        Define the \dfn{topiaric beta} of $x\in \Omega$ to be
            $$\beta(x) = \frac{\topiary(K)(x)}{\|\topiary(K)\|^2}.$$
        Note $\beta\in \mathcal{H}.$
        For every $x\in K$ we have that
            $$\psi(x) - r_K \leq \beta(x)\left(\int \psi \dd \mu -r_K\right)$$
        where equality holds exactly on the topiaric index of $K.$
    \end{theorem}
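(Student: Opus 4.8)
The plan is to show that the claimed inequality is, after unwinding the definitions of $\beta$ and $r_K$, literally the statement that the aesthetic margin $\iota_\mu$ is nonpositive on $K$ and vanishes on $\Topiary(K)$, which we have already proved. The whole content therefore reduces to two short algebraic identities that rewrite $\beta(x)\left(\int\psi\dd\mu - r_K\right)$ as $\mu(x)$.

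First I would establish the normalization identity
$$\int \psi\dd\mu - r_K = \norm{\mu}^2.$$
This follows from the definition $r_K = \int \psi(t) - \mu(t)\dd\mu(t) = \int \psi\dd\mu - \int \mu(t)\dd\mu(t)$ together with the reproducing-embedding fact that integrating the embedded element $\mu$, viewed as a function, against the measure $\mu$ returns the inner product, namely $\int \mu(t)\dd\mu(t) = \ip{\mu}{\mu} = \norm{\mu}^2$. This is exactly the defining property $\ip{f}{\mu} = \int f\dd\mu$ of the kernel embedding of distributions, applied with $f = \mu$.

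Given this, since $\mu \neq 0$ forces $\norm{\mu}^2 > 0$, I would compute
$$\beta(x)\left(\int \psi\dd\mu - r_K\right) = \frac{\topiary(K)(x)}{\norm{\topiary(K)}^2}\cdot \norm{\mu}^2 = \mu(x).$$
Thus the desired inequality $\psi(x) - r_K \leq \beta(x)\left(\int\psi\dd\mu - r_K\right)$ is equivalent to $\psi(x) - r_K \leq \mu(x)$, that is, to $\iota_\mu(x) = \psi(x) - \mu(x) - r_K \leq 0$. By the preceding theorem the aesthetic margin $\iota_\mu$ is nonpositive on all of $K$, which yields the inequality, and its zero set is precisely $\Topiary(K)$, which yields the equality case exactly on the topiaric index.

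I expect the only step requiring genuine care to be the normalization identity $\int\mu\dd\mu = \norm{\mu}^2$: one must carefully distinguish the two roles played by $\mu$, as a compactly supported measure against which we integrate and as the Hilbert space element (hence function) it embeds to, and confirm that integration against the measure $\mu$ agrees with the $\mathcal{H}$-inner product against the embedded element. Everything else is bookkeeping invoking the already-established behaviour of the aesthetic margin, so I would keep that portion brief.
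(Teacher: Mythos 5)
Your proposal is correct and follows exactly the paper's own argument: establish the normalization identity $\int\psi\,\dd\mu - r_K = \|\mu\|^2$ via the kernel-embedding property, rewrite $\beta(x)\left(\int\psi\,\dd\mu - r_K\right)$ as $\mu(x)$, and reduce the inequality to the nonpositivity of the aesthetic margin on $K$ with equality precisely on the topiaric index. You have simply spelled out the steps the paper leaves terse, so no further comparison is needed.
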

    \begin{proof}
        Let $\mu$ be the topiary of $K$.
        Note $\psi - \mu - \int \psi(t)-\mu(t) \dd \mu(t) \leq 0.$
        Also, $\|\mu\|^2 = \int \psi \dd \mu - r_K.$
        So, $\psi - r_K = \mu = \frac{\mu}{\|\mu\|^2}(\int \psi \dd \mu - r_K).$
    \end{proof}
    The classical capital asset pricing model \cite{francis2013modern} is essentially the assumption that
    whatever assets being analyzed form a topiaric index and contain a risk free asset. We note that ``alpha" based analysis with respect to a topiaric index returns alpha uniformly nonpositive,
    that is $$\alpha(x)=\psi(x) - r_K - \beta(x)\left(\int \psi \dd \mu -r_K\right)$$
    is nonpositive.

    The exact elaboration of Julia-Caratheodory-Wolff theory in such a setting is unclear, especially with respect to more sophisticated notions such as horocycles.
    As with the approach of Agler, McCarthy and Young of Julia-Caratheodory type theorems \cite{amy10a}, we derive estimates using Cauchy-Schwarz.
    A somewhat simplistic interpretation is that they describe regularity of functions near boundary optima.
    \begin{theorem}[Topiaric Julia-Caratheodory inequality]
        Let $\mathcal{H}$ be real reproducing kernel Hilbert space on some domain $\Omega.$
        Let $\psi$ be a continuous function on $\Omega.$
        Write $d(x,y) = \|k_x-k_y\|.$
        Let $K \subseteq \Omega$ be compact.
        Let $\mu$ be the topiary of $K.$
        Let $x$ be in the topiaric index of $K.$
        For every $y\in K$ such that $d(x,y)\neq 0$ we have that
            $$-\|\psi\| \leq \frac{\psi(y) - \psi(x)}{d(y,x)} \leq \frac{\mu(y)-\mu(x)}{d(y,x)}\leq \|\mu\|$$
        where $\|\psi\|$ if formally infinite if $\psi \notin \mathcal{H}.$
        Here, the middle inequality is an equality if $y$ is also in the topiaric index.
    \end{theorem}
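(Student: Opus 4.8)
The plan is to observe that, after clearing the positive denominator $d(y,x)$, all three inequalities reduce to just two ingredients: the reproducing property combined with Cauchy--Schwarz for the two outer bounds, and the preceding green topiary theorem for the middle bound. Throughout I would use that the topiary $\mu$ is a genuine element of $\mathcal{H}$, since it lies in the space of embedded measures, which sits inside $\mathcal{H}$; consequently its function values are recovered by $\mu(z)=\langle \mu, k_z\rangle$, and likewise $\psi(z)=\langle \psi, k_z\rangle$ whenever $\psi\in\mathcal{H}$.

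First I would dispatch the middle inequality. Because $d(y,x)>0$, it is equivalent to $\psi(y)-\psi(x)\le \mu(y)-\mu(x)$, that is, to $\psi(y)-\mu(y)\le \psi(x)-\mu(x)$. Writing the aesthetic margin as $\iota_\mu(z)=\psi(z)-\mu(z)-r_K$, this is precisely the assertion that $\iota_\mu(y)\le \iota_\mu(x)$. But $x$ lies in the topiaric index, so $\iota_\mu(x)=0$, while $y\in K$ forces $\iota_\mu(y)\le 0$ by the green topiary theorem; hence $\iota_\mu(y)\le 0=\iota_\mu(x)$, as desired. When $y$ is also in the topiaric index we have $\iota_\mu(y)=0=\iota_\mu(x)$, so the inequality becomes an equality.

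For the outer inequalities I would apply Cauchy--Schwarz to the vector $k_y-k_x$. The right-hand bound follows from $\mu(y)-\mu(x)=\langle \mu, k_y-k_x\rangle\le \|\mu\|\,\|k_y-k_x\|=\|\mu\|\,d(y,x)$, after dividing by $d(y,x)$. Symmetrically, when $\psi\in\mathcal{H}$ the one-sided Cauchy--Schwarz estimate gives $\psi(y)-\psi(x)=\langle \psi, k_y-k_x\rangle\ge -\|\psi\|\,d(y,x)$, which yields the left-hand inequality; when $\psi\notin\mathcal{H}$ the convention $\|\psi\|=+\infty$ renders the left-hand inequality vacuously true.

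There is no serious obstacle here; the proof is essentially a repackaging of the green topiary theorem and Cauchy--Schwarz. The only genuine care points, which I would make explicit, are first the justification that $\mu\in\mathcal{H}$ so that the reproducing property legitimately computes its pointwise values $\mu(y),\mu(x)$, and second the bookkeeping of signs in the one-sided Cauchy--Schwarz estimates together with the $\psi\notin\mathcal{H}$ convention for the left-hand term.
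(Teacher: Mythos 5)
Your proof is correct and takes essentially the same route as the paper's: the two outer bounds come from Cauchy--Schwarz applied to $k_y-k_x$, and the middle bound comes from the aesthetic margin being nonpositive on $K$ and vanishing on the topiaric index (the paper merely phrases this step through the capital asset pricing inequality, which is the same fact repackaged). One small correction: the nonpositivity of $\iota_{\mu}$ on $K$ is the content of the first-order optimality theorem in the Topiarism section, not of the green topiary theorem, which concerns the green frontier.
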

    \begin{proof}
        Note,
            $$\psi(x) - r_K = \beta(x)\left(\int \psi \dd \mu -r_K\right) = \mu(x),$$
        and
            $$\psi(y) - r_K \leq \beta(y)\left(\int \psi \dd \mu -r_K\right) = \mu(y).$$

        Subtracting, 
            $$\psi(y) - \psi(x)  \leq \mu(y) -\mu(x).$$
        Thus, the middle inequality is satisfied.
        Now,
            $$\mu(y) -\mu(x) = \langle\mu , k_y- k_x \rangle \leq \|\mu\|\|k_y- k_x\|$$
        by the Cauchy-Schwartz inequality.
        Substituting $d(x,y) = \|k_x-k_y\|$ and rearranging gives the right hand inequality.
        Similarly, if so defined
            $$\psi(y) -\psi(x) = \langle\psi , k_y- k_x \rangle \geq -\|\psi\|\|k_y- k_x\|$$
        by Cauchy-Schwartz.
    \end{proof}
    Informally, we may interpret $\psi$ as more contractive that $\mu.$ 
    \begin{figure}[H]
    \centering
    \begin{tikzpicture}
        \fill[yellow, opacity=0.2] (-1,-1.1) -- (-.1,-1.1) -- plot[domain=0:5, samples=2] ( {(\x - 1 + 2)},\x) -- ( -1,5) -- cycle;

        % Draw the axes
        \draw[<->] (0,-1.1) -- (0,5) node[right] {$\mu(y)$};
        \draw[<->] (-1,0) -- (5,0) node[above] {$\psi(y)$};
        
        \fill[black] (1,0) circle (1.5pt);        

        % Plot the point (psi(x), mu(x))
        \fill[red] (2,1) circle (2pt);
        \fill[red] (1.5,.5) circle (2pt);

        \fill[red] (4,3) circle (2pt);

        \fill[red] (4.3,3.3) circle (2pt);

        \fill[brown] (3.4,3.4) circle (2pt);
        \fill[brown] (-.5,-.5) circle (2pt);
        \fill[brown] (.75,4.5) circle (2pt);
        \fill[violet] (.5,2.5) circle (2pt);

        \fill[violet] (4.5,-.5) circle (2pt);
         \fill[violet] (3.5,1) circle (2pt);
 \fill[violet] (4,2) circle (2pt);
        
        % Label the point
        \node at (2, 1) [below right] {$(\mu(x), \psi(x))$};
        \node at (1, 0) [below right] {$r_K$};
        
        % Draw the line with slope 1 passing through the point (psi(x), mu(x))
        \draw[<->,domain=-1.1:5, dashed, samples=2] 
            plot ({(\x - 1 + 2)},\x) node[right] {};

    \end{tikzpicture}
    \caption{Our topiaric Julia-Caratheodory inequality says that the elements in the topiaric frontier lie on a line with slope $1$, depicted in red, elements of $K$, depicted in brown
    must be below the line, whereas elements in $\Omega$ not in $K$ may be above or below, depicted in violet. The line must intersect the $\psi(y)$ axis at the available topiaric rate $r_K.$
    Thus, the topiaric Julia-Caratheodory inequality is somewhat analogous to the classical security market line, \cite{francis2013modern}.}
    \label{fig:slope1line}
    \end{figure}
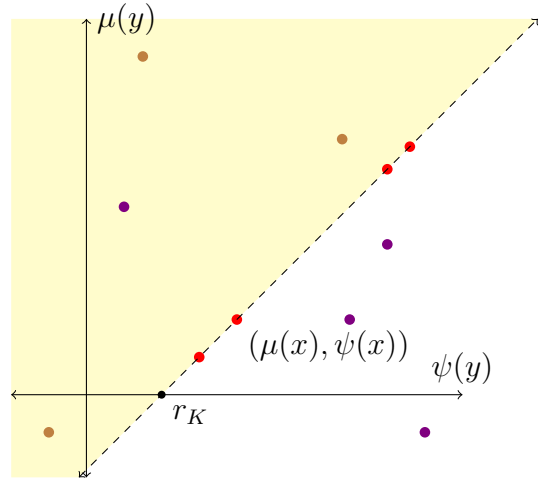

    \begin{figure}[H]
        \centering
        \begin{tikzpicture}
            \fill[yellow, opacity=0.2] (-1,-1.1) -- (-.1,-1.1) -- plot[domain=0:5, samples=2] ( {(\x - 1 + 2)},\x) -- ( -1,5) -- cycle;

            % Draw the axes
            \draw[<->] (0,-1.1) -- (0,5) node[right] {$\mu(y)$};
            \draw[<->] (-1,0) -- (5,0) node[above] {$\psi(y)$};
            
            \fill[black] (1,0) circle (1.5pt);        

            % Plot the point (psi(x), mu(x))
            \fill[red] (2,1) circle (2pt);
            \fill[red] (1.5,.5) circle (2pt);

            \fill[red] (4,3) circle (2pt);

            \fill[red] (4.3,3.3) circle (2pt);

            \fill[brown] (-1,-0.1) circle (2pt);
            \fill[brown] (-.5,0.2) circle (2pt);
            \fill[brown] (-.75,0.1) circle (2pt);
            \fill[brown] (.2,.3) circle (2pt);
            \fill[violet] (-.5,0) circle (2pt);

            \fill[violet] (4.5,-.5) circle (2pt);
             \fill[violet] (3.5,1) circle (2pt);
     \fill[violet] (4,2) circle (2pt);
            
            % Label the point
            \node at (2, 1) [below right] {$(\mu(x), \psi(x))$};
            \node at (1, 0) [below right] {$r_K$};
            
            % Draw the line with slope 1 passing through the point (psi(x), mu(x))
            \draw[<->,domain=-1.1:5, dashed, samples=2] 
                plot ({(\x - 1 + 2)},\x) node[right] {};

        \end{tikzpicture}
        \caption{If instead we have that the remainder of $\Omega$ lies on the axis, we see that the topiary has a small inner product with the market, which in interpretations 
        as covariance gives that it is independent of the rest of the space. (One can imagine a variety of reasons for such in the securities context, innovation, new ideas, fraud, memes and so on.)}
        \label{fig:slope1line2}
    \end{figure}
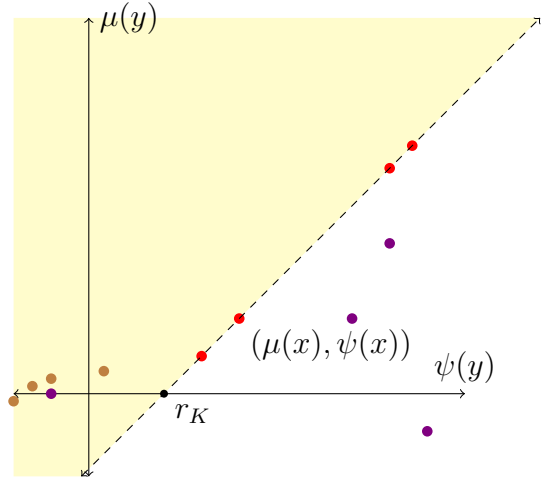

    \section{The invisible index theorem}

    We can become more comfortable with both the empirical failure of the
    capital asset pricing model and the obviously contradictory held-fast belief in such if we employ the belief that there is some, perhaps hypothetical, bigger ``platonic" \dfn{invisible index} which is supported everywhere out there
    where the model is saved. (Called such in analogy with the classical invisible hand.) Note, for example, no investor has access to all assets-- matters such as salience, sanctions and other barriers. See Roll's critique \cite{roll1977critique} on lack of testability of the classical capital asset pricing model.
    Note nothing obstructs the topiary of such a topiaric index minus a few key elements from being quite sparse. 
    \begin{theorem}[Invisible index theorem]
        Let $\mathcal{H}$ be real reproducing kernel Hilbert space on some domain $\Omega.$
        Let $\psi$ be a continuous function on $\Omega.$
        Let $K_1 \subseteq K_2 \subseteq \Omega$ be compact. Suppose $K_2$ is a topiaric index.
        The topiary of $K_1$ minimizes $\|\mu-\topiary(K_2)\|.$
    \end{theorem}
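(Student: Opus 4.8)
The plan is to collapse the optimization that defines $\topiary(K_1)$ into a nearest-point (metric projection) problem, by showing that the hypothesis ``$K_2$ is a topiaric index'' forces $\psi$ to agree, on all of $K_2$, with an honest element of $\mathcal{H}$ shifted by a constant, and that the relevant element is precisely $\topiary(K_2)$ itself.

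First I would unwind the hypothesis. Since $K_2$ is a topiaric index there is a compact $\mathcal{K}$ with $K_2 = \Topiary(\mathcal{K}) = \iota^{-1}_{\topiary(\mathcal{K})}(0)$. Writing $\lambda = \topiary(\mathcal{K})$ and $r = r_{\mathcal{K}}$, this says exactly that the margin $\iota_\lambda = \psi - \lambda - r$ vanishes on $K_2$, i.e.
$$\psi(x) = \lambda(x) + r \qquad \text{for all } x \in K_2.$$
Thus on $K_2$ the (possibly non-$\mathcal{H}$) function $\psi$ coincides with $\lambda + r$, where $\lambda \in \mathcal{H}$.

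The key step is to identify the invisible index $\topiary(K_2)$ with $\lambda$. The green topiary theorem gives that the aesthetic margin of $\lambda$ vanishes on $\operatorname{supp}\lambda$, while $K_2 = \iota_\lambda^{-1}(0)$ is exactly the zero set of that margin; hence $\operatorname{supp}\lambda \subseteq K_2$ and $\lambda$ is an admissible probability measure on $K_2$. For any probability measure $\mu$ supported on $K_2$ I may substitute $\psi = \lambda + r$ and use $\langle \lambda,\mu\rangle = \int \lambda \, d\mu$ together with $\int d\mu = 1$ to complete the square:
$$\mathfrak{O}(\mu) = \int \psi \, d\mu - \tfrac12\|\mu\|^2 = \langle \lambda, \mu\rangle + r - \tfrac12\|\mu\|^2 = r + \tfrac12\|\lambda\|^2 - \tfrac12\|\mu - \lambda\|^2.$$
This is maximal precisely when the embedded measure $\mu$ equals $\lambda$, so $\topiary(K_2) = \lambda$, and therefore $\psi - \topiary(K_2) \equiv r$ on all of $K_2$.

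Finally I would repeat the computation on $K_1$. Because $K_1 \subseteq K_2$, every probability measure $\mu$ supported on $K_1$ still satisfies $\int \psi \, d\mu = \langle \topiary(K_2), \mu\rangle + r$, so
$$\mathfrak{O}(\mu) = r + \tfrac12\|\topiary(K_2)\|^2 - \tfrac12\|\mu - \topiary(K_2)\|^2.$$
Maximizing $\mathfrak{O}$ over measures on $K_1$ — which is by definition how $\topiary(K_1)$ is produced — is thus the same problem as minimizing $\|\mu - \topiary(K_2)\|$ over those measures, which is the claim. I expect the genuine obstacle to be the middle paragraph: one must recognize that belonging to the class of topiaric indices is exactly what promotes the generic inequality ``aesthetic margin $\le 0$ on $K_2$'' to the equality ``$\psi - \topiary(K_2)$ is constant on $K_2$,'' and hence that the hypothetical index $\topiary(K_2)$ must be the very topiary witnessing $K_2$ as an index. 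The surrounding projection computations are then the routine observation, already noted after the definition of $\mathfrak{O}$, that minimizing $\|\psi + r - \mu\|$ and maximizing $\mathfrak{O}$ coincide.
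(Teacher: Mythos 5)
Your proof is correct and follows essentially the same route as the paper: reduce the maximization defining $\topiary(K_1)$ to minimizing $\|\mu-\topiary(K_2)\|$ by using that $\psi$ differs from $\topiary(K_2)$ by a constant on $K_2\supseteq K_1$, which is exactly the expansion/completing-the-square identity the paper writes down. Your middle paragraph --- identifying $\topiary(K_2)$ with the topiary of the witnessing set $\mathcal{K}$ by locating $\operatorname{supp}\lambda$ inside $K_2$ and completing the square --- fills in the step the paper asserts without justification (``as $K_2$ is an index, $\topiary(K_2)=\psi+r$''), so it is a useful elaboration rather than a different argument.
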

    \begin{proof}
        We see to minimize
       $\|\mu-\topiary(K_2)\|^2 = \|\mu\|^2 -2\int \topiary(K_2) \dd \mu + \|\topiary(K_2)\|^2.$
        Thus, we want to maximize  $\int \topiary(K_2) \dd \mu - \|\mu\|^2/2$
        As $K_2$ is an index $\topiary(K_2) = \psi + r$ for some $r\in \mathbb{R},$
        and thus we have our original maximization problem.
    \end{proof}

    To complement the above we note that, if $\psi\in \mathcal{H}$, then maximizing our aesthetic objective is equivalent to minimizing $\|\psi - \mu\|.$
    Namely, if $\psi$ is an embedded distribution, then $\psi$ is the topiary.

    \section{On the green frontier}
    We now define the appropriate distinguished boundary for our present consideration.
    \begin{definition}
        Let $\mathcal{H}$ be real reproducing kernel Hilbert space on some domain $\Omega.$
        Let $\psi$ be a continuous function on $\Omega.$
        Let $K \subseteq \Omega$ be compact.
        Let $\mathcal{U}$ be the collection of all open dense subsets of the space of embedded measures on $K.$
        Let 
            $$M_U = \{ x\in K | \iota_\mu(x)=\sup_K \iota_\mu \textrm{ for some }\mu \in U\}.$$
        We define the \dfn{green frontier} by the formula $$Green(K) = \bigcap_{U \in \mathcal{U}} \overline{M_U}.$$
    \end{definition}

    Note that given a reproducing kernel Hilbert space of harmonic functions with harmonic $\psi$
    that $Green(K) \subseteq \partial K$ by the maximum principle. Essentially, the green frontier is similar to the Shilov boundary in functional analysis, or the distinguished boundary in complex analysis.
    Moreover, via Krein-Milman, the green frontier is a subset of the extreme points of $K$ if we view $K$ as a subset of $\mathcal{H}$ via the kernel embedding.

    \begin{theorem}[Green topiary theorem]
        Let $\mathcal{H}$ be real reproducing kernel Hilbert space on some domain $\Omega.$
        Let $\psi$ be a continuous function on $\Omega.$
        Let $K \subseteq \Omega$ be compact.
        There exists a measure supported on the green frontier of $K$ which embeds to $\topiary(K).$
    \end{theorem}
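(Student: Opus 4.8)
The plan is to prove this via Choquet's integral representation theorem, after first localizing the problem to the topiaric index. Let $\mu^\ast=\topiary(K)$ and let $\nu^\ast$ be any maximizing distribution for $\mathfrak O$; such exists by weak-$\ast$ compactness of the measures on the compact set $K$ together with continuity of $\mathfrak O$. By the topiary theorem the aesthetic margin $\iota_{\mu^\ast}$ equals its supremum $0$ on the support of $\nu^\ast$ and is nonpositive on $K$, so $\operatorname{supp}\nu^\ast\subseteq F:=\Topiary(K)=\{x:\iota_{\mu^\ast}(x)=0\}=\operatorname{argmax}_K(\psi-\mu^\ast)$, which is compact. Hence it suffices to produce a representing measure of $\mu^\ast$ supported on $Green(K)$, and we may work entirely inside $F$.

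First I would set up the convex-geometric picture. Assuming as usual that $x\mapsto k_x$ is a continuous injection, hence a homeomorphism of $K$ onto the compact set $\widehat K=\{k_x:x\in K\}\subseteq\mathcal H$, let $C_F=\overline{\operatorname{Conv}}\{k_x:x\in F\}$, a metrizable compact convex subset of $\mathcal H$. Writing $m=\nu^\ast(F)$ and assuming $m>0$ (the case $\mu^\ast=0$ being trivial, with empty support), the element $\mu^\ast/m=\int_F k_x\,d(\nu^\ast/m)$ is the barycenter of a probability measure and so lies in $C_F$. By Choquet's theorem there is a probability measure $\sigma$ on $C_F$ with barycenter $\mu^\ast/m$ and $\operatorname{supp}\sigma\subseteq\overline{\operatorname{ext}C_F}$, and by Milman's theorem $\operatorname{ext}C_F\subseteq\{k_x:x\in F\}$. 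Pulling $\sigma$ back through the homeomorphism and rescaling by $m$ yields a positive measure $\nu_0$ on $F$ that embeds to $\mu^\ast$ and is supported on the preimage of $\overline{\operatorname{ext}C_F}$.

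The crux is then the bridging claim that this preimage lies in $Green(K)$. Since $Green(K)$ is closed and, by a Straszewicz-type density theorem, the closure of the exposed points of $C_F$ equals $\overline{\operatorname{ext}C_F}$, it is enough to show that whenever $x_0\in F$ and $k_{x_0}$ is an \emph{exposed} point of $C_F$ we have $x_0\in Green(K)$. Fix an exposing functional $h\in\mathcal H$, so that $h(x_0)>h(y)$ for every $y\in F\setminus\{x_0\}$; thus $x_0$ is the unique maximizer over $F$ of $h$. Because $\psi-\mu^\ast$ is constant on $F$ and strictly smaller off $F$, perturbing $\mu^\ast$ slightly in the direction dictated by $h$ produces, for each small $\eps>0$, an embedded measure $\mu_\eps$ whose aesthetic margin is maximized on $K$ within a neighborhood of $x_0$ shrinking as $\eps\to0$; here one uses that $\operatorname{argmax}\iota_\mu=\operatorname{argmax}_K(\psi-\mu)$, the additive constant $r_K$ being irrelevant. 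Because $x_0$ is a \emph{strict} such maximizer, it is stable under small further perturbations, so these witnesses can be taken inside any prescribed open dense $U$, giving $x_0\in\overline{M_U}$ for all $U$, i.e.\ $x_0\in Green(K)$.

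The main obstacle I anticipate is exactly this last perturbation step. Two requirements must be reconciled: the perturbation $\mu_\eps$ has to remain inside the cone of embedded measures, so that it is a legitimate competitor in the definition of $M_U$, yet it must move $\mu^\ast$ in the exposing direction $h$, which a priori need not point into that cone; and the maximizer of $\iota_{\mu_\eps}$ must depend stably enough on $\mu_\eps$ to survive replacement of $\mu_\eps$ by a nearby member of an arbitrary open dense set. The first point should be handled by taking the perturbation to be a genuine small embedded measure realizing the separation guaranteed by exposedness, rather than $-\eps h$ itself; the second by strictness of the exposed maximizer together with upper semicontinuity of the argmax. Carrying both out carefully, while matching the genericity built into the intersection over all open dense $U$, is where the real work lies.
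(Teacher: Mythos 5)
Your reduction to the topiaric index and the Choquet--Milman step are fine (modulo the added standing assumption that $x\mapsto k_x$ is a continuous injection, which the paper never states but implicitly needs), and they do produce a representing measure for $\topiary(K)$ supported on $\overline{\operatorname{ext}\, C_F}$. But the bridging claim --- that an exposed point $k_{x_0}$ of $C_F$ forces $x_0\in Green(K)$ --- is exactly where the proof is missing, and the gap is not the routine technicality you describe. The exposing functional $h$ is an arbitrary element of $\mathcal H$, whereas membership of $x_0$ in $M_U$ requires exhibiting $\mu$ in the space of \emph{embedded measures} on $K$ with a maximizer of $\psi-\mu$ near $x_0$. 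Writing an admissible perturbation of $\mu^\ast$ as $\mu_t=(1-t)\mu^\ast+t\nu$ with $\nu$ embedded, your own computation shows that concentration of $\operatorname{argmax}_K(\psi-\mu_t)$ at $x_0$ requires the direction $\mu^\ast-\nu$ itself to expose $k_{x_0}$ in $C_F$. So your proposed fix --- ``a genuine small embedded measure realizing the separation guaranteed by exposedness'' --- presupposes that the cone of exposing functionals of $k_{x_0}$ meets the compact convex set $\mu^\ast-C_K$ away from $0$; that is an assertion about the position of $C_K$ inside $\mathcal H$ which exposedness alone does not supply, and neither Straszewicz--Klee nor Choquet theory gives it. Note also that the inclusion you need runs opposite to the paper's Krein--Milman remark: the paper observes $Green(K)\subseteq\operatorname{ext}$, while you need (exposed) $\subseteq Green(K)$.

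The paper's own route avoids this entirely: it deduces the theorem from the update inequality, Theorem \ref{update}. In the greedy construction every point mass added is, by construction, a maximizer over $K$ of the aesthetic margin of an embedded measure $\mu_n$, so it lies in $M_U$ whenever $\mu_n\in U$, and the update inequality forces the iterates to converge to $\topiary(K)$. Perturbing the iterates into a given open dense $U$ (using the sup-norm stability of the argmax that you already noted) yields, for each $U$, a representing measure supported on $\overline{M_U}$; a finite-intersection-property argument over the weak-$\ast$ compact convex set of representing measures, using $\overline{M_{U_1\cap U_2}}\subseteq\overline{M_{U_1}}\cap\overline{M_{U_2}}$, then produces a single measure supported on $Green(K)$. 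If you want to salvage your Choquet route, the missing lemma is precisely that the relevant exposing directions can be realized inside $\mu^\ast-C_K$; as stated that is unproved, and I do not believe it holds in general.
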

    The green topiary theorem is an immediate consequence of the effectiveness of gradient ascent
    given by the update inequality, Theorem \ref{update}. 
    As the support must lie on a marginal hypersurface, we have that the support of the topiary is contained in the intersection of the green frontier with a marginal hypersurface--
    such is deeply related the the theme of \dfn{semi-stability}. In classical semi-stability regimes, we often have extremely constrained geometry and in particular low dimensionality
    \cite{bps1, bps2, ams06, knese2015integrability}.
    
\section{Approximation}
    We now give some perspectives on approximating the topiary, especially in light of empirical sparsity.
    \subsection{Ascent methods}
    First we expand on our discussion of gradient ascent type methods.
    \begin{lemma}
        Let $\mathcal{H}$ be real reproducing kernel Hilbert space on some domain $\Omega.$
        Let $\psi$ be a continuous function on $\Omega.$
        Let $K \subseteq \Omega$ be compact.
        We have that
        $$\mathfrak{S}(\mu) \geq \mathfrak{O}(\topiary(K)) - \mathfrak{O}(\mu) + \|\topiary(K)-\mu\|^2/2.$$
    \end{lemma}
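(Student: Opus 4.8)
The plan is to exploit the fact that $\mathfrak{O}$ is an honest quadratic functional, so that the whole right-hand side collapses to a single clean integral, after which the claimed bound is just the statement that an average is dominated by a supremum. Write $\nu = \topiary(K)$, and recall that $\nu$ is a probability measure supported on $K$: the directional derivative $D\mathfrak{O}(\mu)[\delta_x-\mu]$ of the preceding lemma is taken in the mass-preserving direction $\delta_x-\mu$, which signals that the optimization defining the topiary is over probability distributions on $K$.

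First I would compute $\mathfrak{O}(\nu)-\mathfrak{O}(\mu)+\tfrac12\|\nu-\mu\|^2$ exactly. Splitting $\mathfrak{O}$ into its linear part $L(\mu)=\int \psi\,\dd\mu$ and its quadratic part $\tfrac12\|\mu\|^2$, and applying the polarization identity $\|\nu\|^2-\|\mu\|^2 = 2\langle \mu,\nu-\mu\rangle + \|\nu-\mu\|^2$ together with the reproducing-kernel pairing $\langle \mu,\rho\rangle = \int \mu\,\dd\rho$ (valid for embedded measures $\rho$, where $\mu$ on the right denotes the embedded element viewed as a function), the $\tfrac12\|\nu-\mu\|^2$ contributions cancel and I obtain the identity
$$\mathfrak{O}(\nu)-\mathfrak{O}(\mu)+\tfrac12\|\nu-\mu\|^2 = \int (\psi-\mu)\,\dd(\nu-\mu).$$

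Next I would rewrite this in terms of the aesthetic margin. Since $\int (\psi-\mu)\,\dd\mu = r_\mu$ and $\iota_\mu(x)=\psi(x)-\mu(x)-r_\mu$, and since $\nu$ has total mass one, the integral becomes
$$\int (\psi-\mu)\,\dd\nu - r_\mu = \int \big(\iota_\mu(x)+r_\mu\big)\,\dd\nu(x) - r_\mu = \int \iota_\mu\,\dd\nu.$$
Because $\nu$ is a probability measure supported on $K$, its integral against $\iota_\mu$ is at most $\sup_K \iota_\mu = \mathfrak{S}(\mu)$, giving $\int \iota_\mu\,\dd\nu \le \mathfrak{S}(\mu)$ and hence the claim.

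The only place requiring care is the algebra of the first step, chiefly the bookkeeping of the measure-versus-function identification in $\langle \mu,\nu-\mu\rangle = \int \mu\,\dd(\nu-\mu)$ and the consistent use of the total-mass-one normalization of $\nu$, both when absorbing $r_\mu$ and when passing to the supremum. There is no genuine obstacle: once $\mathfrak{O}$ is recognized as exactly quadratic, the strong-convexity term $\tfrac12\|\nu-\mu\|^2$ is produced for free by polarization rather than estimated, and the inequality reduces to the trivial domination of an average by a supremum.
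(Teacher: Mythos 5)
Your proposal is correct and takes essentially the same route as the paper: the paper's one-sentence proof asserts exactly your key identity, namely that integrating the aesthetic margin $\iota_\mu$ against $\topiary(K)$ yields $\mathfrak{O}(\topiary(K)) - \mathfrak{O}(\mu) + \|\topiary(K)-\mu\|^2/2$ precisely, and then concludes by dominating this average by the supremum. Your polarization computation merely fills in the algebra the paper leaves implicit.
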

    \begin{proof}
        Integrating the aesthetic margin with respect to the topiary gives exactly the right hand side,
        thus there is at least one $x$ with aesthetic margin that large.
    \end{proof}

    The following is a calculation.
    \begin{lemma}
        Let $\mathcal{H}$ be real reproducing kernel Hilbert space on some domain $\Omega.$
        Let $\psi$ be a continuous function on $\Omega.$
        Let $K \subseteq \Omega$ be compact.
        Let $\mu$ be a distribution on $K.$
        The optimal measure $\mu_t = (1-t) \mu + t\delta_x$ occurs at
            $$t = \frac{\iota_{\mu}(x)}{\|k_x-\mu\|^2}.$$
        Moreover,
            $$\mathfrak{O}(\mu_t) - \mathfrak{O}(\mu) = \frac{\iota_{\mu}(x)^2}{2\|k_x-\mu\|^2}.$$
    \end{lemma}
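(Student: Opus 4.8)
The plan is to pass to the kernel embedding, where the convex combination becomes an affine segment: identifying $\delta_x$ with $k_x$, we have $\mu_t = \mu + t(k_x - \mu)$ as elements of $\mathcal{H}$. Since $\mathfrak{O}(\nu) = \int \psi\,d\nu - \|\nu\|^2/2$ is a linear functional minus half a squared norm, $\mathfrak{O}(\mu_t)$ is an \emph{exact} quadratic polynomial in $t$, so the entire statement reduces to computing its three coefficients and optimizing over $t$.

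First I would expand the two pieces separately. The integral term is affine in $t$, namely $\int\psi\,d\mu_t = (1-t)\int\psi\,d\mu + t\,\psi(x)$, whose $t$-derivative is $\psi(x) - \int\psi\,d\mu$. The norm term expands as $\|\mu_t\|^2 = \|\mu\|^2 + 2t\langle\mu,k_x-\mu\rangle + t^2\|k_x-\mu\|^2$, so the quadratic coefficient of $\mathfrak{O}(\mu_t)$ is $-\tfrac12\|k_x-\mu\|^2$.

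The one step requiring care is recognizing the linear coefficient as the aesthetic margin. Using the reproducing property $\langle\mu,k_x\rangle = \mu(x)$ together with $\langle\mu,\mu\rangle = \int\mu\,d\mu = \|\mu\|^2$, the cross term evaluates to $\langle\mu,k_x-\mu\rangle = \mu(x) - \|\mu\|^2$. Collecting terms, the derivative of $\mathfrak{O}(\mu_t)$ at $t=0$ is $\psi(x) - \int\psi\,d\mu - \mu(x) + \|\mu\|^2$, which is precisely $\iota_\mu(x)$ once one rewrites the constant $\int(\psi-\mu)\,d\mu = \int\psi\,d\mu - \|\mu\|^2$. This identification, translating the integral form of $\iota_\mu$ into inner products, is the only non-mechanical point.

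Finally, since the leading coefficient $-\tfrac12\|k_x-\mu\|^2$ is negative, the quadratic $\mathfrak{O}(\mu_t) = \mathfrak{O}(\mu) + \iota_\mu(x)\,t - \tfrac12\|k_x-\mu\|^2\,t^2$ is maximized at its vertex $t = \iota_\mu(x)/\|k_x-\mu\|^2$, giving the stated optimal step. Substituting this $t$ back into the quadratic yields $\mathfrak{O}(\mu_t) - \mathfrak{O}(\mu) = \iota_\mu(x)^2/(2\|k_x-\mu\|^2)$, the claimed increase. I expect no genuine obstacle beyond careful bookkeeping of the integral-versus-inner-product translation flagged above.
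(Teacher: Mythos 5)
Your calculation is correct and is exactly the computation the paper has in mind (the paper simply labels the lemma ``a calculation'' and omits the details): expand $\mathfrak{O}(\mu_t)$ as a quadratic in $t$, identify the linear coefficient with $\iota_\mu(x)$ via the reproducing property and $\int\mu\,\dd\mu=\|\mu\|^2$, and read off the vertex. No discrepancy with the paper's approach.
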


    Combining the two previous we see the following.
    \begin{theorem}[Update inequality] \label{update}
        Let $\mathcal{H}$ be real reproducing kernel Hilbert space on some domain $\Omega.$
        Let $\psi$ be a continuous function on $\Omega.$
        Let $K \subseteq \Omega$ be compact.
        Let $\mu$ be a distribution on $K.$
        Choose $x$ an optimum of the aesthetic margin.
        For the optimal measure $$\mu_t = (1-t) \mu + t\delta_x$$ we have that
            $$\mathfrak{O}(\mu_t) - \mathfrak{O}(\mu) \geq 
            \frac{[\mathfrak{O}(\topiary(K)) - \mathfrak{O}(\mu) + \|\topiary(K)-\mu\|^2/2]^2}
            {2\|k_x-\mu\|^2}.$$
    \end{theorem}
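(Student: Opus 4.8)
The statement is assembled directly from the two preceding lemmas, so the plan is simply to chain them while tracking signs. Write $B = \mathfrak{O}(\topiary(K)) - \mathfrak{O}(\mu) + \|\topiary(K)-\mu\|^2/2$ for the bracketed quantity appearing in the conclusion. The score lemma supplies the lower bound $\mathfrak{S}(\mu) \geq B$, while the calculation lemma evaluates the gain along the optimal segment exactly; the whole proof is the combination of these two facts.

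First I would fix $x$ to be an optimum of the aesthetic margin, so that by definition of the score $\iota_\mu(x) = \mathfrak{S}(\mu)$. Feeding this choice of $x$ into the calculation lemma gives the exact identity
$$\mathfrak{O}(\mu_t) - \mathfrak{O}(\mu) = \frac{\iota_\mu(x)^2}{2\|k_x-\mu\|^2} = \frac{\mathfrak{S}(\mu)^2}{2\|k_x-\mu\|^2}$$
at the optimal step size $t$. It then remains only to replace $\mathfrak{S}(\mu)^2$ in the numerator by the lower bound $B^2$ and read off the claimed inequality.

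The one point requiring care, and the only step that is not pure bookkeeping, is that passing from $\mathfrak{S}(\mu) \geq B$ to $\mathfrak{S}(\mu)^2 \geq B^2$ demands $B \geq 0$, since otherwise squaring could reverse the inequality. This is where I would invoke that $\topiary(K)$ is the maximizer of $\mathfrak{O}$: that optimality gives $\mathfrak{O}(\topiary(K)) - \mathfrak{O}(\mu) \geq 0$, and the term $\|\topiary(K)-\mu\|^2/2$ is manifestly nonnegative, so $B \geq 0$. Consequently $\mathfrak{S}(\mu) \geq B \geq 0$, squaring is monotone on this range, and substituting $\mathfrak{S}(\mu)^2 \geq B^2$ into the exact identity above yields precisely the stated lower bound. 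I expect the only real obstacle to be exactly this nonnegativity verification, which is why the optimality of $\topiary(K)$, rather than merely its being some feasible competitor, is essential to the argument.
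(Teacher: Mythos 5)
Your proof is correct and is precisely the argument the paper intends: its entire proof of the update inequality is the remark that it follows by ``combining the two previous'' lemmas, i.e.\ the score lower bound and the exact gain formula from the calculation lemma. Your explicit check that the bracketed quantity $B$ is nonnegative (so that squaring the inequality $\mathfrak{S}(\mu)\geq B$ is legitimate) is a detail the paper leaves implicit, but it does not constitute a different route.
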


    The following follows immediately from the previous.
    \begin{theorem}[Topiaric law of large numbers]
        Let $\mathcal{H}$ be real reproducing kernel Hilbert space on some domain $\Omega.$
        Let $\psi$ be a continuous function on $\Omega.$
        Let $K \subseteq \Omega$ be compact.
        Optimally generate a sequence of measures $\mu_n$ as in the update inequality.
        The aesthetic objective of $\mu_n$ converges to that of $\topiary(K)$ with rate at worst $O(1/n).$
        The sequence $\mu_n$ converges at rate at worst $O(1 / n^{1/4}).$ If $\psi \in \mathcal{H},$ 
        then $\mu_n$ converges at rate at worst $O(1 / n^{1/2}).$
    \end{theorem}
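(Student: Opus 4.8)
The plan is to recognize the prescribed update as a Frank--Wolfe (conditional gradient) scheme on the convex set of embedded probability measures on $K$, and to drive everything from the update inequality (Theorem \ref{update}). Write $\mu^\ast = \topiary(K)$ and let $\Delta_n = \mathfrak{O}(\mu^\ast) - \mathfrak{O}(\mu_n) \ge 0$ be the optimality gap. First I would record two structural facts. Since $K$ is compact and $x \mapsto k_x$ is continuous, the set $\{k_x : x \in K\}$ is bounded in $\mathcal{H}$, and each $\mu_n$ lies in its closed convex hull; hence there is a finite $D$ with $\|k_x - \mu_n\| \le D$ for all admissible $x$ and all $n$. Moreover the optimal step never decreases the objective, so $\Delta_{n+1} \le \Delta_n$.

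For the aesthetic objective rate I would discard the nonnegative term $\|\mu^\ast - \mu_n\|^2/2$ from inside the square in the update inequality and bound the denominator by $D^2$, obtaining
$$\Delta_n - \Delta_{n+1} = \mathfrak{O}(\mu_{n+1}) - \mathfrak{O}(\mu_n) \ge \frac{\Delta_n^2}{2D^2}.$$
Setting $c = 1/(2D^2)$, using $\Delta_{n+1}\le \Delta_n$, and dividing by $\Delta_n\Delta_{n+1}$ gives $\frac{1}{\Delta_{n+1}} - \frac{1}{\Delta_n} \ge c$, which telescopes to $\frac{1}{\Delta_n} \ge cn + \frac{1}{\Delta_0}$ and hence $\Delta_n \le 2D^2/n = O(1/n)$.

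For the iterate rate in general I would instead discard the term $\Delta_n$ from the square, so the same update inequality yields
$$\Delta_n \ge \Delta_n - \Delta_{n+1} \ge \frac{(\|\mu^\ast - \mu_n\|^2/2)^2}{2D^2} = \frac{\|\mu^\ast - \mu_n\|^4}{8D^2}.$$
Since $\Delta_n = O(1/n)$ by the previous step, this forces $\|\mu^\ast - \mu_n\|^4 = O(1/n)$, i.e. $\|\mu^\ast - \mu_n\| = O(n^{-1/4})$. When $\psi \in \mathcal{H}$ the objective is genuinely strongly concave: the exact quadratic expansion $\mathfrak{O}(\mu_n) = \mathfrak{O}(\mu^\ast) + \langle \psi - \mu^\ast, \mu_n - \mu^\ast\rangle - \tfrac12\|\mu_n - \mu^\ast\|^2$ together with the first-order optimality $\langle \psi - \mu^\ast, \mu_n - \mu^\ast\rangle \le 0$ gives $\Delta_n \ge \tfrac12\|\mu_n - \mu^\ast\|^2$, whence $\|\mu_n - \mu^\ast\|^2 \le 2\Delta_n = O(1/n)$ and $\|\mu_n - \mu^\ast\| = O(n^{-1/2})$.

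The step I expect to be the crux is the iterate estimate, and the key realization is that no delicate summability argument is needed: the quartic one-step relation (which is exactly what survives of strong concavity via the gradient-ascent lemma even when $\psi \notin \mathcal{H}$), combined with the already-established $O(1/n)$ objective rate through the crude bound $\Delta_n - \Delta_{n+1} \le \Delta_n$, immediately produces $O(n^{-1/4})$. The dichotomy between $n^{-1/4}$ and $n^{-1/2}$ is precisely the question of whether the pairing $\langle \psi - \mu^\ast, \cdot\rangle$ is a bounded functional on $\mathcal{H}$, i.e. whether $\psi \in \mathcal{H}$; only then can one upgrade the gap bound from the quartic relation to the sharp quadratic one. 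A minor care point is confirming that the optimal step stays feasible (or that clamping to $[0,1]$ preserves the update inequality), but this is already subsumed in Theorem \ref{update}.
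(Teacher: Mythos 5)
Your proposal is correct and follows exactly the route the paper intends: the paper offers no written proof beyond the remark that the theorem ``follows immediately from'' the update inequality, and your Frank--Wolfe-style analysis (the telescoping recursion $\Delta_n-\Delta_{n+1}\ge \Delta_n^2/(2D^2)$ for the $O(1/n)$ objective rate, the quartic one-step bound for the $O(n^{-1/4})$ iterate rate, and the strong-concavity upgrade to $O(n^{-1/2})$ when $\psi\in\mathcal{H}$) is precisely the standard filling-in of those details. No discrepancy to report.
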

    Compare to \cite{del1999central} which gives results on the dumptruck-sandpile-moving Wasserstein distance for measure approximation .

    \subsubsection{Greedy constructions}
    
The \dfn{greedy construction} of the topiary
$\textrm{greed}_n$ is constructed by taking the convex combination of $\textrm{greed}_{n-1}$  and a point mass at a maximizer contained in $\mathrm{Green}(A)$ of $\iota(\textrm{greed}_{n-1})$
such the aesthetic objective of that convex combination in maximized. Choosing $\textrm{greed}_0$ in $\mathrm{Green}(A)$ is advised.
The greedy construction converges to a measure with support in $\mathrm{Green}(A).$

Define the \dfn{growth set} of $A$ to be $$\mathrm{Grow}(A)= \{x\notin A| \iota(\mathrm{topiary}(A))> 0\}.$$
\begin{observation}
    Let $B$ be compact.
    Suppose $\mathrm{Grow}(A)\cap B \neq \emptyset.$
    Then,
        $\mathrm{Grow}(A)\cap \mathrm{Topiary}(B) \neq \emptyset$
\end{observation}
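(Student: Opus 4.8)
The plan is to compare the value of the aesthetic objective at $\topiary(A)$ and $\topiary(B)$, driving the comparison with the gradient-ascent calculation behind the update inequality and with the fact that the support of any topiary sits inside its own topiaric index. Write $\mu=\topiary(A)$ and $\nu=\topiary(B)$. Since the aesthetic margin of $\mu$ is nonpositive on $A$ (by the theorem identifying the topiaric index), the constraint $x\notin A$ in the definition of $\mathrm{Grow}(A)$ is automatic, so $\mathrm{Grow}(A)=\{x:\iota_\mu(x)>0\}$; the hypothesis then hands me a point $x_0\in B$ with $\iota_\mu(x_0)>0$.

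First I would show $\mathfrak{O}(\nu)>\mathfrak{O}(\mu)$. Tilting toward $x_0$ via $\mu_t=(1-t)\mu+t\delta_{x_0}$, the calculation lemma gives that at the optimal $t$ the objective strictly increases, by $\iota_\mu(x_0)^2/(2\|k_{x_0}-\mu\|^2)>0$. Provided $\mu_t$ is an admissible competitor for the $B$-problem, optimality of $\nu$ yields $\mathfrak{O}(\nu)\ge\mathfrak{O}(\mu_t)>\mathfrak{O}(\mu)$.

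Next I would record the exact identity obtained by expanding the quadratic $\mathfrak{O}$, namely that $\int\iota_\mu\,d\nu$ is the directional derivative $D\mathfrak{O}(\mu)[\nu-\mu]$:
$$\int\iota_\mu\,d\nu=\mathfrak{O}(\nu)-\mathfrak{O}(\mu)+\tfrac12\|\nu-\mu\|^2.$$
Combined with the previous step the right-hand side is strictly positive, so $\iota_\mu$ is strictly positive at some point of $\operatorname{supp}(\nu)$. That support lies in $\Topiary(B)$ (the margin of $\nu$ vanishes on it), and $\iota_\mu>0$ there places the point in $\mathrm{Grow}(A)$; hence $\mathrm{Grow}(A)\cap\Topiary(B)\neq\emptyset$, as desired.

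The delicate step — and where the real content sits — is the admissibility claim used to get $\mathfrak{O}(\nu)>\mathfrak{O}(\mu)$, i.e.\ that $\operatorname{supp}(\mu)\cup\{x_0\}\subseteq B$. This is immediate once $A\subseteq B$, which I expect is the intended reading, since the observation lives in the ``growing'' greedy context of the preceding paragraph. I would flag that some such monotonicity is essential: without it nothing forces $\mathfrak{O}(\nu)>\mathfrak{O}(\mu)$, and one can arrange (taking $\psi\in\mathcal H$, so that each topiary is the Hilbert-space projection of $\psi$ onto the convex hull of the relevant kernels) a compact $B$ whose topiary concentrates entirely off $\mathrm{Grow}(A)$ even though $B$ meets $\mathrm{Grow}(A)$. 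So I would either carry the hypothesis $A\subseteq B$ explicitly or restrict to the regime in which $\topiary(A)$ remains feasible over $B$.
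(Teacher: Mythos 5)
The paper offers no proof of this observation at all---it is stated bare---so there is nothing to compare your argument against; but your route is exactly the one the surrounding machinery is built for, and it is correct once the hypothesis you flag is added. The two ingredients you use are the paper's own: the update lemma (tilting $\mu_t=(1-t)\mu+t\delta_{x_0}$ strictly increases $\mathfrak{O}$ when $\iota_\mu(x_0)>0$; if the optimal $t$ exceeds $1$ just take $t=1$, the objective is still strictly increasing on the segment) and the score lemma's identity $\int\iota_\mu\,\dd\nu=\mathfrak{O}(\nu)-\mathfrak{O}(\mu)+\tfrac12\|\nu-\mu\|^2$, which the paper states verbatim in the ascent section. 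Your observation that $\mathrm{Grow}(A)=\{x:\iota_{\topiary(A)}(x)>0\}$ because the margin is nonpositive on $A$ is also the right way to discharge the $x\notin A$ clause at the end.

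Your flagged gap is genuine: the statement as literally written is false without some monotonicity relating $A$ and $B$, and this can be seen in a three-point example. Take $\Omega=\{1,2,3\}$ with kernels $k_1=e_1$, $k_2=e_2$, $k_3=\tfrac12(e_1+e_2)$ in $\R^2$, $\psi=(0,-0.8,-0.6)$, $A=\{1\}$, $B=\{2,3\}$. Then $\topiary(A)=\delta_1$ and $\iota_{\delta_1}=(0,\,0.2,\,-0.1)$, so $\mathrm{Grow}(A)=\{2\}$ and $\mathrm{Grow}(A)\cap B\neq\emptyset$. A direct computation on the segment $(1-s)\delta_2+s\delta_3$ gives $\mathfrak{O}=-1.3+0.7s-0.25s^2$, increasing on $[0,1]$, so $\topiary(B)=\delta_3$ with $\iota_{\delta_3}=(0.6,\,-0.2,\,0)$ and $\Topiary(B)=\{3\}$; hence $\mathrm{Grow}(A)\cap\Topiary(B)=\emptyset$. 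Consistently with your diagnosis, here $\int\iota_{\delta_1}\,\dd\delta_3=-0.1<0$ because $\mathfrak{O}(\topiary(B))<\mathfrak{O}(\topiary(A))$. So the observation should carry the hypothesis $A\subseteq B$ (or at least that $\topiary(A)$, augmented by the point of $\mathrm{Grow}(A)\cap B$, is feasible for the $B$-problem), under which your proof is complete.
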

Let $\mathrm{hedge}(A)$ be a finite signed measure with total variation $1$ such that $\iota(\mu)(x)\equiv 0.$
The  hedge need not be defined, we call sets $A$ where it is defined \dfn{prunable}.
Let $\mathrm{Prune}(A)$ be the support of the negative part of $\mathrm{hedge}(A).$
\begin{observation}
    Suppose $A$ is prunable.
    Suppose $\mathrm{Prune}(A) \neq \emptyset.$
    Then,
        $$\mathrm{Prune}(A)\cap \mathrm{Topiary}(A) \neq \mathrm{Prune}(A)$$
\end{observation}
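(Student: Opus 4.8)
The plan is to read the defining property of $\mathrm{hedge}(A)$ as the statement that its aesthetic margin vanishes identically, so that the embedded signed measure $h:=\mathrm{hedge}(A)$, supported on $A$, satisfies $\psi-h\equiv c$ for a constant $c$; financially $h$ is an idealized index for which the capital asset pricing model holds exactly at every point, realized over $A$ only by taking short positions. Writing $\mu:=\topiary(A)$ and subtracting this identity from the margin $\iota_{\topiary(A)}=\psi-\mu-r_A$ gives
\[
\iota_{\topiary(A)}=(h-\mu)-(r_A-c).
\]
Thus the inequality $\iota_{\topiary(A)}\le 0$ on $A$ (from the green topiary/score theorem) says precisely that the potential $g:=h-\mu$ attains its maximum $r_A-c$ over $A$ exactly on $\Topiary(A)$, and is strictly smaller on the inefficient locus $\{x\in A:\iota_{\topiary(A)}(x)<0\}$.

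The crux is then the single inequality $\int \iota_{\topiary(A)}\,\dd h>0$. Granting it, I would decompose $h=h_+-h_-$ into its Jordan parts, with $\mathrm{Prune}(A)=\operatorname{supp}(h_-)$. If $\operatorname{supp}(h_-)$ were contained in $\Topiary(A)$, then $\iota_{\topiary(A)}\equiv 0$ there forces $\int \iota_{\topiary(A)}\,\dd h_-=0$, while $\iota_{\topiary(A)}\le 0$ on $A$ together with $h_+\ge 0$ forces $\int \iota_{\topiary(A)}\,\dd h_+\le 0$; subtracting yields $\int \iota_{\topiary(A)}\,\dd h\le 0$, contradicting the crux. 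Hence $\operatorname{supp}(h_-)$ must charge the strictly inefficient locus, i.e. $\mathrm{Prune}(A)\cap\Topiary(A)\neq\mathrm{Prune}(A)$, which is exactly the assertion. This is the natural dual of the growth picture: index assets are marginally efficient, so the shorts one takes to build the ideal index cannot all land on the index and must reach into the strictly inefficient region.

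To establish the crux I would first record $\int \iota_{\topiary(A)}\,\dd\mu=\int(\psi-\mu)\,\dd\mu-r_A=0$ and then use $h=\iota_{\topiary(A)}+\mu+(r_A-c)$ to expand
\[
\int \iota_{\topiary(A)}\,\dd h=\|\iota_{\topiary(A)}\|^2+(r_A-c)\,\langle \iota_{\topiary(A)},1\rangle .
\]
The first term is nonnegative, and strictly positive as soon as $A$ genuinely contains a strictly inefficient asset, which I would extract from the hypotheses $\mathrm{Prune}(A)\neq\emptyset$ and prunability (when pruning is nontrivial one is forced into $c=0$, so $h$ embeds $\psi$ itself). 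The main obstacle is the sign of the second term: one must know $(r_A-c)\langle \iota_{\topiary(A)},1\rangle\ge 0$, which reduces to controlling the available topiaric rate and the pairing of the margin with the constant function. I expect this to be the delicate step, since it degenerates exactly in the case where all of $A$ is already a topiaric index (where the statement genuinely fails), and because $1\notin\mathcal H$ or $\psi\notin\mathcal H$ in general; I would handle these by a renormalization/limiting argument on the embedded measures and by isolating the non-degeneracy that prunability with a nonempty prune set supplies.
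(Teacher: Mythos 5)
The paper states this observation without proof, so I am judging your argument on its own terms. Your reduction is the right one and is surely the intended mechanism: if $\int \iota_{\topiary(A)}\,\dd h>0$ for $h=\mathrm{hedge}(A)$, then the Jordan decomposition $h=h_+-h_-$ together with $\iota_{\topiary(A)}\le 0$ on $A$ and $\iota_{\topiary(A)}\equiv 0$ on $\Topiary(A)$ immediately forces $\mathrm{supp}(h_-)\not\subseteq\Topiary(A)$, exactly as you say. The gap is in the crux itself. Your expansion $\int \iota\,\dd h=\|\iota\|^2+(r_A-c)\langle\iota,1\rangle$ treats the identity $h=\iota+\mu+(r_A-c)$, which holds only \emph{pointwise on $A$} (both sides equal $\psi-c$ there), as if it were an identity in $\mathcal H$; it is not, and in general $\iota=\psi-\mu-r_A$ and the constant $1$ need not even lie in $\mathcal H$. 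The uncontrollable term $(r_A-c)\langle\iota,1\rangle$ that stalls your argument is an artifact of this illegitimate expansion, not a genuine obstacle.

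The repair is to integrate the pointwise identity against the \emph{measure} $h-\mu$ rather than pairing it with $\iota$ in $\mathcal H$. Since $\int\iota_{\topiary(A)}\,\dd\mu=0$ (which you correctly record), and since $h-\mu$ is supported on $A$ where $\iota_{\topiary(A)}=(h-\mu)-(r_A-c)$ as functions,
\begin{equation*}
\int \iota_{\topiary(A)}\,\dd h=\int \iota_{\topiary(A)}\,\dd(h-\mu)=\left\|h-\mu\right\|^2-(r_A-c)\bigl(h(A)-\mu(A)\bigr)=\left\|h-\mu\right\|^2,
\end{equation*}
the constant term vanishing because both measures have total mass one (note the paper's ``total variation $1$'' should be read as total mass $1$ for this to work, consistent with the Markowitz interpretation). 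This replaces your problematic $\|\iota\|^2+(r_A-c)\langle\iota,1\rangle$ with a clean nonnegative quantity and no residual term. Strict positivity then requires $h\neq\mu$ as embedded measures: if they embed identically, $h$ is merely a different signed representative of the positive topiary, and then $\mathrm{Prune}(A)$ can genuinely sit inside $\Topiary(A)$ (e.g.\ two points with identical kernel functions), so one must either assume the kernel functions over $A$ are linearly independent or stipulate the canonical choice of hedge; under that proviso $\mathrm{Prune}(A)\neq\emptyset$ gives $h\neq\mu$ and the crux follows. Your side remark that nontrivial pruning forces $c=0$ is not correct and is not needed.
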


The hedge and the topiary coincide exactly on topiaric frontiers. That is, if we know the topiaric frontier in advance, the problem is purely algebraic.
\begin{observation}
    $\mathrm{hedge}(\mathrm{Topiary}(A))=\mathrm{topiary}(A).$
\end{observation}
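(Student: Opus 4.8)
The plan is to recognize the topiary of $A$ as an admissible hedge of the set $\Topiary(A)$ and then to appeal to uniqueness of the hedge as an embedded measure. Write $\mu^{\ast} = \topiary(A)$ and $S = \Topiary(A) = \iota_{\mu^{\ast}}^{-1}(0)$. First I would assemble the two facts already available: by the theorem describing the topiary, the aesthetic margin $\iota_{\mu^{\ast}}$ is nonpositive on $A$ and vanishes on $\operatorname{supp}(\mu^{\ast})$, so $\operatorname{supp}(\mu^{\ast}) \subseteq S$; and by the definition of the topiaric index, $\iota_{\mu^{\ast}}$ vanishes identically on $S$. The essential point is that the defining property of $\mathrm{hedge}(S)$ is self-referential, namely that it is a measure $\nu$ supported on $S$ whose own margin $\iota_{\nu}$ is identically zero on $S$, and $\mu^{\ast}$ satisfies precisely this, because the margin required to vanish on $S$ is by construction $\iota_{\mu^{\ast}}$ itself. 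Since $\mu^{\ast}$ is a genuine distribution it is positive of total mass one, hence of total variation one, and it is supported on $S$; thus $\mu^{\ast}$ meets every clause in the definition of the hedge, which incidentally shows $S$ is prunable.

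Next I would establish uniqueness, so that ``the'' hedge is well defined and must equal $\mu^{\ast}$. The cleanest route is through strict concavity: expanding the margin as $\iota_{\nu}(x) = \psi(x) - \nu(x) - r_{\nu}$, the condition $\iota_{\nu} \equiv 0$ on $S$ forces $\psi - \nu$ to equal the constant $r_{\nu}$ along $S$, which is exactly the first-order stationarity (Euler--Lagrange) condition for maximizing $\mathfrak{O}(\mu) = \int \psi \, d\mu - \|\mu\|^2/2$ over mass-one signed measures supported on $S$. As $\mathfrak{O}$ is strictly concave in the embedded measure and the feasible set is convex, a stationary point is the unique maximizer as a Hilbert space element, so any two hedges coincide. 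Alternatively, and more directly, if $\nu_1,\nu_2$ are both hedges then $\rho = \nu_1 - \nu_2$ is a measure of mass zero supported on $S$ whose embedded function is constant along $S$, whence $\|\rho\|^2 = \int \rho \, d\rho = (\text{const})\,\rho(S) = 0$, giving $\nu_1 = \nu_2$ as embedded measures.

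The step I expect to be the main obstacle is reconciling the normalizations, since the hedge is constrained by total variation while the topiary is a positive mass-one distribution; one must rule out a spurious second solution of different total mass. Here I would use the self-consistency of the constant: integrating $\psi - \nu = r_{\nu}$ against $\nu$ gives $r_{\nu} = r_{\nu}\,\nu(S)$, so either $r_{\nu} = 0$ or $\nu(S) = 1$, which pins the mass at one in the generic case $r_A \neq 0$ and aligns the hedge normalization with that of $\mu^{\ast}$. Combining feasibility of $\mu^{\ast}$ from the first paragraph with uniqueness from the second then yields $\mathrm{hedge}(\Topiary(A)) = \topiary(A)$.
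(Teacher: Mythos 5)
The paper states this as a bare observation and supplies no proof, so there is nothing of the authors' to compare against; what you have written is essentially the argument the reader is expected to supply, and it is sound in its main structure. The feasibility half is exactly right: $\operatorname{supp}(\topiary(A))\subseteq \Topiary(A)$ by the theorem on the aesthetic margin, $\iota_{\topiary(A)}$ vanishes on $\Topiary(A)$ tautologically, and the self-referential clause in the definition of the hedge (the margin of $\nu$ itself must vanish on the set) is therefore met by $\topiary(A)$. Your direct uniqueness argument --- $\nu_1-\nu_2$ embeds to a function constant on $S$ and has mass zero, so $\|\nu_1-\nu_2\|^2=\mathrm{const}\cdot(\nu_1-\nu_2)(S)=0$ --- is the cleanest route and is consistent with the paper's earlier remark that it is the embedded measure, not the measure, that is unique.

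The one step to tighten is the normalization, where you quietly slide from $r_\nu$ to $r_A$. Your identity $r_\nu=r_\nu\,\nu(S)$ correctly gives the dichotomy ``$r_\nu=0$ or $\nu(S)=1$'' for each competing hedge $\nu$, but the degenerate branch is governed by the rate of \emph{that} hedge, not by $r_A$: for instance, with $\psi=0$ and linearly dependent kernel functions on a finite $S$, a signed measure of total variation one that embeds to $0$ satisfies $\iota_\nu\equiv 0$ and $r_\nu=0$, hence is a ``hedge'' under the literal definition, even though $r_A=-\|\topiary(A)\|^2\neq 0$; it need not agree with the topiary as an embedded measure. This is really a defect of the paper's definition (total variation one rather than mass one): if the normalization is read as $\nu(A)=1$, which is what the paper's identification of the hedge with the Markowitz optimal portfolio requires, your direct argument closes with no genericity assumption at all. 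Flag that reading explicitly and the proof is complete.
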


The \dfn{second greedy construction} of the topiary
$\textrm{sgreed}_n$ is constructed by taking the convex combination of $\textrm{sgreed}_{n-1}$  and a point mass at the maximizer of $\iota(\textrm{sgreed}_{n-1})$
such the aesthetic objective of that convex combination in maximized, and then eliminating any point masses in the distribution one by one that are now disadvangtageous and scaling the rest of the measure. 
Doing such minimizes the following chronic zig-zag-drag problem, which occurs when one chooses to greedily include elements which are not in the topiaric frontier.

\begin{figure}[h!]
\begin{tikzpicture}[scale=2, every node/.style={black}]

    % Define points
    \coordinate (A) at (-3,1);
    \coordinate (B) at (2,1);
    \coordinate (C) at (0,2);
    \coordinate (O) at (0,0);
    \coordinate (P) at (4/5,8/5);
    \coordinate (Q) at (-1.47*.155,1.47*.987);
    \coordinate (R) at (1.38*0.198366, 1.38*0.980128);

    % Draw vectors and segments
    \draw[thick, ->] (O) -- (C);
    \draw[dashed] (C) -- (B);
    \draw[thick,->] (O) -- (P);
    \draw[dashed] (A) -- (P);
    \draw[thick, ->] (O) -- (Q);
    \draw[dashed] (Q) -- (B);
    \draw[thick, ->] (O) -- (R);
    \draw[dashed, pink] (A) -- (B);

    \draw[thin, ->, color=brown] (O) -- (C);
    \draw[ ->, color=brown] (O) -- (B);
    \draw[->,color=brown] (O) -- (A);

    \draw[ draw=black,latex-latex] (C) node[below right] {\tiny $\mathrm{bad}_0$};
    \draw[draw=black,latex-latex] (P) node[below right] {\tiny $\mathrm{bad}_1$};
    \draw[draw=black,latex-latex] (Q) node[below right] {\tiny $\mathrm{bad}_2$};
    \draw[draw=black,latex-latex] (R) node[below right] {\tiny $\mathrm{bad}_3$};
    \draw[draw=black,latex-latex] (0,1) node[below right] {\tiny $topiary$};

\end{tikzpicture}
\caption{Take our reproducing kernel to correspond to the grammian of $(-3,1)$, $(0,2)$ and $(2,1)$ and assume $\psi=0.$ Starting from a bad seed and applying the greedy algorithm,
we see a chronic zig-zag-drag pattern. If we had chosen a proper intial state, the algorithm in fact halts in one step. Here, the aesthetic objective is merely the negative of the norm squared
over two.}
\end{figure}
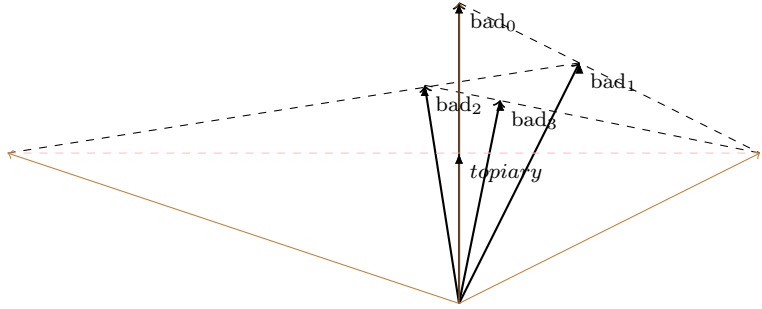

    \subsection{Algebraic approaches and combinatorics}
   
    One can consider the problem algebraically for finite sets as well. The key being when one wants to augment some proposed support $K$ with some new $x$
    while preserving the nullity of the aesthetic margin on its support, one may need to remove some elements from $K$ as they may be redundant or incompatiable.
    
    \begin{theorem}[Discussion theorem]
        Let $\mathcal{H}$ be real reproducing kernel Hilbert space on some domain $\Omega.$
        Let $\psi$ be a continuous function on $\Omega.$
        Let $K \subseteq \Omega$ be a finite topiaric index.
        Suppose $\topiary(K)$ has positive aesthetic margin at some $x.$
        There exists $B \subseteq K$ such that 
            $\mathfrak{O}(\topiary(B\cup\{x\}))> \mathfrak{O}(\topiary(K)).$
    \end{theorem}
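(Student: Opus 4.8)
The plan is to realize the ``discussion'' as one greedy augmentation step followed by a pruning of the atoms of $K$ that become redundant once $x$ is admitted. Write $\mu = \topiary(K)$; since $K$ is a finite topiaric index, $\mu$ is supported on all of $K$ and $\iota_\mu$ vanishes identically on $K$, while by hypothesis $\iota_\mu(x) > 0$. In particular, because $\iota_\mu$ vanishes on $K$, positive margin forces $x \notin K$, so $K \cup \{x\}$ is a genuine enlargement.

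The first step records a strict gain from adjoining $x$. By the derivative lemma, the directional derivative of $\mathfrak{O}$ at $\mu$ toward the new point is $D\mathfrak{O}(\mu)[\delta_x - \mu] = \iota_\mu(x) > 0$; since $\mathfrak{O}$ is concave in the measure, moving a small amount along $\mu_t = (1-t)\mu + t\delta_x$ strictly increases the objective, and when $k_x \neq \mu$ the optimal step from the calculation lemma supplies the explicit gain $\iota_\mu(x)^2 / (2\|k_x - \mu\|^2)$. Consequently any maximizing distribution $\nu$ for $K \cup \{x\}$ obeys $\mathfrak{O}(\nu) > \mathfrak{O}(\mu) = \mathfrak{O}(\topiary(K))$.

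The second step is the pruning. Fix a distribution $\nu$ on $K \cup \{x\}$ realizing $\topiary(K \cup \{x\})$. I claim $\nu$ charges $x$: were $\nu$ supported on $K$, then---every distribution on $K$ being a distribution on $K \cup \{x\}$---$\nu$ would maximize $\mathfrak{O}$ over $K$, forcing $\mathfrak{O}(\nu) = \mathfrak{O}(\topiary(K))$ and contradicting the strict increase above. Set $B = \operatorname{supp}(\nu) \setminus \{x\} \subseteq K$. Then $\nu$ is a distribution on $B \cup \{x\}$, and since $B \cup \{x\} \subseteq K \cup \{x\}$, every distribution on $B \cup \{x\}$ has objective at most $\mathfrak{O}(\nu)$; hence $\nu$ also maximizes $\mathfrak{O}$ over $B \cup \{x\}$. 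Therefore $\mathfrak{O}(\topiary(B \cup \{x\})) = \mathfrak{O}(\nu) > \mathfrak{O}(\topiary(K))$, as claimed.

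The genuine obstacle is conceptual---selecting $B$, rather than computing anything. The lazy choice $B = K$ already yields the stated inequality, but the content of the theorem is that $B \cup \{x\}$ is again a finite topiaric index: its topiary $\nu$ is supported on all of $B \cup \{x\}$, so $\iota_\nu$ vanishes identically there, and $B$ arises from $K$ by deleting exactly the atoms rendered incompatible by the arrival of $x$. Algebraically this is an \emph{active-set} move---solve the linear hedge system $\langle \nu, k_y\rangle = \psi(y) - r$ for $y \in K \cup \{x\}$ under the unit-mass constraint, delete any atom receiving negative weight, and iterate; finiteness forces termination, and the strict gain of the first step guarantees that $x$ is never deleted. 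I would present the support argument above as the proof and record the active-set reading as its constructive counterpart.
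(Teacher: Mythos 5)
Your proposal is correct and follows essentially the same route as the paper, whose entire proof is the one-liner ``take $B$ to be the topiary of $K\cup\{x\}$'' (meaning the support of that topiary with $x$ removed); you simply supply the details the paper omits, namely the strict gain $\mathfrak{O}(\topiary(K\cup\{x\}))>\mathfrak{O}(\topiary(K))$ from the positive margin at $x$ and the observation that the maximizer over $K\cup\{x\}$ is already supported on $B\cup\{x\}$. The added care (including the degenerate case $k_x=\mu$ and the active-set remark) is welcome but does not change the underlying argument.
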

    \begin{proof}
        Take $B$ to be the topiary of $K \cup \{ x\}.$
    \end{proof}

    The proof is hardly enlightening or effective. However, the following approach is:
    \begin{enumerate}
        \item Initialize $\mu$ to be $\topiary(K).$
        \begin{enumerate}
            \item Find a signed distribution $\nu$ such that the mass of $K$ is $1$ on the support of $\mu$ such that the embedded measures $\delta_x(y) = \nu(y)$ for $y$ in the support of $\mu.$
            \item Find the maximum $t$ such that $\mu + t(\delta_x-\nu)$ is a positive measure and $\iota(\mu + t(\delta_x-\nu))(x)\geq 0$
            \item If $\iota_{\mu + t(\delta_x-\nu)}(x) = 0,$ we are done. Otherwise, replace $\mu$ with $\mu + t(\delta_x-\nu)$ and repeat.
        \end{enumerate}
    \end{enumerate}
    However, we caution that such an approach may not produce the optimal $B,$ and we do not get the runtime guarantees given in the topiaric law of large numbers. If we remove $0$ or $1$ elements from
    $K$ to obtain $B$ indeed it must be optimal. Thus, in terms of adding mass at the optimal $x$ in a way that preserves the constancy of aesthetic margin on its support except perhaps at $x$,
    we see that there is a sort of ko rule as in the game of Go-- if we add $x$ to remove $y$, we can not immediately desire to add back $y.$
    Because of the lack of meaningful runtime estimates for such an algorithm, at least those apparent to the author, one fears there may be some metaphorical ko fights,
    where it adds $x$ and removes $y$ then adds some other point, then adds back $y$ and removes $x$, then adds some other point, then adds back $x$ to remove $y$ and so on.

    \begin{figure}
    \centering
     \animategraphics[width=10cm, loop, autoplay, poster=last]{3}%frame rate
    {topiary-}%path to figures
    {0}%start index
    {302}%end index
    \caption{The concrete version of updating via the discussion theorem, where $x$ is chosen to maximize the aesthetic margin. Here the space $\mathcal{H}$ is the real Hardy space on the disk. 
    The goldenrod curve represents the set $K$, and the yellow part is where the aesthetic margin is negative, the purple part where it is postive. The red points represent the current support. The pink point represents the most recent point we added. We have taken $\psi=0.$ Note that each component of the purple parts (and yellow parts for that matter) touches the boundary, and that $0$ is always in the purple part as it represents somewhat of an invisible index. We will discuss how the combination solves mazes later on. One may look at this as trying to ``positively classify" $K.$ In the animation, we see it start with a stereotype of $K$ which persists and is rarely covering
much of $K$ except the extremes. Slowly, it resolves more and works to cover gaps it has missed, sometimes making a large shift in its understanding.
We have neglected to depict the Julia-Caratheodory theorem as $\psi=0,$ so the Julia-Caratheodory theorem projects the goldenrod set to a line $x=-y$, under which the image of the support is the origin. The goldenrod set is projected to the ray the left the origin, and $0$ is projected to some point to the right of the origin.}
    \label{fig:my_label}
    \end{figure}

    \subsection{Post hoc no-backtracking  presentations}
    We now discuss the theory of dissection and construction of finite topiaric indices.
    \begin{theorem}[Finite accessibility]
        Let $\mathcal{H}$ be real reproducing kernel Hilbert space on some domain $\Omega.$
        Let $\psi$ be a continuous function on $\Omega.$
        Let $K \subseteq \Omega$ be a finite topiaric index. There exists an element $x$ such that
        $K \setminus \{x\}$ is a finite topiaric index.
    \end{theorem}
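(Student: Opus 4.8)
The plan is to collapse everything to a finite–dimensional quadratic program on the simplex of probability vectors indexed by $K$, using the characterization — immediate from the definitions together with the capital asset pricing inequality — that $K$ is a topiaric index exactly when the aesthetic margin $\iota_\mu$ of $\mu = \topiary(K)$ vanishes on \emph{all} of $K$, not merely on its support. (Indeed, if $K = \Topiary(A)$ then $\topiary(A)$ is supported in $K$ and optimal over the larger class of measures on $A$, hence $\topiary(K)=\topiary(A)$ and $\Topiary(K)=K$.) Working in $V=\spn{k_x : x\in K}$ and remembering that $\mathfrak{O}$ depends on $\psi$ only through the finitely many values $\psi(x)$, I would record that $\mu$ is the optimizer over $\Delta=\conv\{k_x:x\in K\}$ with $\iota_\mu(y)=\langle\psi-\mu,k_y-\mu\rangle\le 0$, equality exactly on the topiaric index; when $\psi$ is itself embedded this is literally the nearest point of $\Delta$ to $\psi$.

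First I would clear the easy cases. If $\mu$ does not have full support, choose $x\in K$ off the support: then $\mu$ remains feasible and optimal for $K\setminus\{x\}$, its margin is still zero throughout $K\setminus\{x\}$, and so $K\setminus\{x\}$ is again a topiaric index. If $\mu$ has full support but the embedded points $\{k_x\}$ are affinely dependent, pick $t\neq 0$ with $\sum_x t_x=0$ and $\sum_x t_x k_x=0$ and slide the coefficient vector of $\mu$ along $t$ until a coordinate first vanishes; this leaves the embedded measure $\mu$, hence the margin, unchanged while deleting one point, again yielding a topiaric index.

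The remaining case — $\mu$ of full support with $\{k_x\}$ affinely independent, so $\Delta$ is an honest simplex and $\mu\in\operatorname{relint}\Delta$ — is where one must genuinely move the optimizer. Since $\mu$ is interior, $\psi-\mu\perp\operatorname{aff}\Delta$, so for each facet $F_i$ (opposite $k_{x_i}$) the nearest point of $\operatorname{aff}F_i$ to $\psi$ is the orthogonal foot $f_i$ of $\mu$ on $\operatorname{aff}F_i$ taken inside $\operatorname{aff}\Delta$. Writing $h_i=\dist(\mu,\operatorname{aff}F_i)>0$ and letting $\nu_i$ be the outward unit normal, a short computation gives $f_i=\mu+h_i\nu_i$ and $f_i\in F_i$ iff $h_i\langle\nu_i,\nu_j\rangle\le h_j$ for all $j\neq i$. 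Choosing $i^\ast$ to minimize $h_i$ and combining $\langle\nu_{i^\ast},\nu_j\rangle\le 1$ (Cauchy–Schwarz for unit vectors, strict since distinct facets of a simplex have distinct normals) with $h_{i^\ast}\le h_j$ yields $h_{i^\ast}\langle\nu_{i^\ast},\nu_j\rangle<h_j$, so $f_{i^\ast}\in\operatorname{relint}F_{i^\ast}$. Then $f_{i^\ast}$ is the topiary of $K\setminus\{x_{i^\ast}\}$, has full support there, and $\psi-f_{i^\ast}\perp\operatorname{aff}F_{i^\ast}$ forces $\iota_{f_{i^\ast}}\equiv 0$ on $K\setminus\{x_{i^\ast}\}$, so that set is a topiaric index.

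I expect this last case to be the main obstacle: one cannot retain the same optimizer, and naive choices (for instance deleting the vertex carrying the largest barycentric weight of $\mu$) fail for skew simplices. The clean resolution is the geometric observation that the perpendicular foot onto the \emph{nearest} facet hyperplane always lands in the relative interior of that facet, which turns a potentially messy combinatorial search into the one–line Cauchy–Schwarz estimate above. The only remaining bookkeeping is the passage from a general continuous $\psi$ to the embedded/nearest–point picture — handled by restricting to $V$ and interpolating the finitely many values $\psi(x)$, or else by arguing directly with the margin functional $\iota_\mu$, which is defined regardless — and the harmless convention that a single point is the terminal topiaric index, so one assumes $\lvert K\rvert\ge 2$.
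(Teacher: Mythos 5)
Your proof is correct, but it takes a genuinely different route from the paper's. The paper argues extremally: among the proper subsets of $K$ that are topiaric indices, it takes one, $K_0$, maximizing $\mathfrak{O}(\topiary(K_0))$; the update inequality then yields a point $x$ such that $\Topiary(K_0\cup\{x\})$ has strictly larger objective, so by maximality it cannot be a proper subset of $K$ and, being contained in $K_0\cup\{x\}\subseteq K$, must equal $K$, forcing $K_0=K\setminus\{x\}$. You instead work directly with the convex geometry of the finite-dimensional program: the degenerate cases (a point outside the support of $\topiary(K)$, or an affine dependence among the $k_x$) are handled by deleting a vertex without moving the embedded optimizer, and in the nondegenerate case you show that the orthogonal foot of $\mu$ on the nearest facet hyperplane lands in the relative interior of that facet, which is then the topiary of the reduced set with vanishing margin. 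Your argument is longer but constructive --- it names the deletable vertex --- and it explicitly covers the degeneracy in which $\topiary(K_0)=\topiary(K)$, where the update inequality yields no strict improvement and the paper's one-line argument is silently incomplete; the paper's argument is shorter, reuses machinery already proved, and feeds directly into the well-ordering corollary. Both proofs depend on the same reading of the definitions, namely that $\Topiary(A)$ is the zero set of the aesthetic margin taken within $A$, so that ``$K$ is a topiaric index'' amounts to $\iota_{\topiary(K)}\equiv 0$ on all of $K$; your parenthetical justification of this equivalence is the right one, and the passage from a general continuous $\psi$ to the nearest-point picture is most cleanly done, as you suggest, by replacing $\psi$ with $\topiary(K)$ itself, which is exactly the invisible index theorem applied to $K_1\subseteq K_2=K$.
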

    \begin{proof}
        Consider the topiaric index $K_0$ which is a proper subset of $K$ and optimizes 
        $\mathfrak{O}(\topiary(K_0)).$ As $K$ is a topiaric index, the update inequality gives that there exists a measure with higher aesthetic objective supported on $K_0 \cup \{x\}$ for some $x \in K.$ Thus, the topiaric index of $K_0 \cup \{x\}$ has higher aesthetic objective and thus can not be a proper subset of $K$ and thus must be equal to $K.$
    \end{proof}

    As an immediate corollary, we see that a topiaric index is a union of nested topiaric indices gained by inductively adding elements. Importantly, this also means that one may also \emph{deconstruct} a topiaric index in a particular order piece by piece.
    \begin{corollary}[Finitary no removal constructability]
        Let $\mathcal{H}$ be real reproducing kernel Hilbert space on some domain $\Omega.$
        Let $\psi$ be a continuous function on $\Omega.$
        Let $K \subseteq \Omega$ be finite and a topiaric index.
        There is a well-ordering on $K$ such that each initial segment is a topiaric index.
    \end{corollary}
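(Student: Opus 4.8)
The plan is to iterate the finite accessibility theorem and then reverse the resulting deletion order. Since $K$ is finite, a well-ordering is nothing more than a linear ordering, so it suffices to produce an enumeration $x_1, x_2, \dots, x_n$ of $K$ for which every initial block $\{x_1, \dots, x_k\}$ is a topiaric index.

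First I would build a descending chain of topiaric indices. Set $K_n = K$. Since $K_n$ is a finite topiaric index, the finite accessibility theorem supplies an element $x_n \in K_n$ such that $K_{n-1} := K_n \setminus \{x_n\}$ is again a topiaric index. Now $K_{n-1}$ is itself a finite topiaric index, so the theorem applies again, yielding $x_{n-1}$ with $K_{n-2} := K_{n-1} \setminus \{x_{n-1}\}$ a topiaric index. Continuing, I obtain a strictly decreasing chain $\emptyset = K_0 \subsetneq K_1 \subsetneq \cdots \subsetneq K_n = K$ in which each $K_k$ is a topiaric index and $K_k \setminus K_{k-1} = \{x_k\}$ is a single point.

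It then remains to read off the order. I would declare $x_1 < x_2 < \cdots < x_n$, so that the initial segment consisting of the first $k$ elements is exactly $K_k$, which by construction is a topiaric index; the full segment is $K_n = K$, and the empty segment is $K_0 = \emptyset$. As $K$ is finite, this linear order is automatically a well-order, which completes the argument.

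The only point requiring care is termination and the base case: the iteration must be allowed to run all the way down, which presupposes that each intermediate set, and in particular the empty set, is admitted as a (possibly degenerate) topiaric index so that the finite accessibility theorem applies at every stage. This is exactly the convention already implicit in the proof of finite accessibility, where one optimizes $\mathfrak{O}(\topiary(K_0))$ over proper topiaric-index subsets $K_0 \subsetneq K$ and hence needs the family of such subsets to be nonempty. Beyond this bookkeeping there is no genuine obstacle, since all the real content is carried by the finite accessibility theorem and the corollary is a purely combinatorial repackaging of it.
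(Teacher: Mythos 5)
Your proof is correct and is precisely the argument the paper intends: iterate the finite accessibility theorem to peel off one element at a time, then reverse the resulting deletion order to get the well-ordering. Your remark about admitting the empty set (or terminating at a singleton) as the degenerate base case is a reasonable bookkeeping observation but does not change the substance.
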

    The point being that if one knew the topiaric frontier in advance, one could come up with an explanation of it such that we start with one thing, add another which is compatiable repeatedly, namely needing no removals.
    Such a presentation makes the portfolio look predetermined, it is more expedient to be understood, and witnessed to be correct.

    It is unclear if there is an infinite analog of no removal constructability, which we give a formal problem for below.
    \begin{problem}[Existence of a topiaric branch]
        Let $\mathcal{H}$ be real reproducing kernel Hilbert space on some domain $\Omega.$
        Let $\psi$ be a continuous function on $\Omega.$
        Let $K \subseteq \Omega$ be compact.
        Does there exist a sequence points $(x_n)^\infty_{n=1}$ such that each $K_n = \{x_1,\ldots,x_n\}$
        is a topiaric index and $topiary(K_n) \rightarrow topiary(K)$ weakly.
    \end{problem}

    Moreover, one expects in ``real data," whatever that means, for there to outliers or elements with otherwise unreliable behavior, which somewhat explains why 
    models might fail, but suggests that one wants to thus find large topiaric indices within your space, without regard to true optimality.
    \begin{problem}[Broad index problem]
        Given a set $K$ find the largest topiaric index possible.
    \end{problem}

\section{Examples}
    \subsection{Minotaur in the hedge maze}

        Consider the real Fock space with kernel $A(z, w) = \Re e^{z,\overline{w}}.$
        Let $\psi =0.$
        Let $M\subseteq \mathbb{C}$ be a compact set with path-connected complement, and have $0\notin M$.
        Starting at $0$ following the gradient of the aesthetic objective of $\topiary(M)$ gives a path connecting
        $0$ to $\infty$ not intersecting $M.$ We note that if the boundary of $M$ is a smooth analytic curve, the topiaric index is finite,
        as any accumulation on a compact smooth closed real analytic curve would force the function to be globally constant as the aesthetic 
        margin of the topiary is constant exactly on the topiaric index. We also note that the contour of $0$ of the aesthetic objective touches
        $M$ exactly at the topiaric index of $M.$ If $M$ is the closure of its interior, indeed such a contour must be tangent.
        (Indeed, the topiary behaves somewhat like a support vector machine from the theory of machine learning and cybernetic intelligence.
        See \cite{poggio2003mathematics} for insight into support vector machines.)

        Of course, on any set winding around $0,$ we must have the topiary be constant. Thus, by the invisible index theorem,
        we are approximating the kernel of $0.$ Recursively defined mazes can encode universal computers. (In the sense that if there is
        path from some particular point,
        then some particular clause is true. That is, one can use a tree to write down the letters of a possible proof which deposits into a pool
        when it finds a legitimate proof.) There is at least some weak analogy of a market absent of shocks behaving like a diffusion process (and likely such has the stocks as waterwheels gathering the flux.) 

        One can do similarly over other spaces of harmonic functions in two variables
        or to reach other locales by minimizing $\|\mu-\delta_\alpha\|$ where $\alpha$ is some point of interest and one wants to find a path from 
        the origin to the boundary.

        \begin{figure}[H]
    \centering
     \animategraphics[width=10cm, loop, autoplay, poster=last]{3}%frame rate
    {topiaron-}%path to figures
    {0}%start index
    {185}%end index
    \caption{The harmonic conjugates of the approximating sequence coming from the concrete implementation of the discussion theorem. The goldenrod part is our ``maze" and plot is colored by distance from
    the value at $0.$ Due to conformality, (or equivalently satisfaction of the Cauchy-Riemann equations) the resulting bright curve through the origin represents a gradient descent path for the topiary.}
    \label{fig:my_label}
    \end{figure}

    It is helpful to constrast with the typical way we see mazes resolved physically in terms of static flows \cite{adamatzky2017physical}.
        Typically, these involve putting a large amount of stuff (such as water or charge, or even a growing slime mold, although its true processes are more complicated) through some maze which is fundamentally inoculated from the stuff.
        Eventually, because of some fundamental nature of this stuff, it finds an efficient path to flow through the maze. Fundamentally, these implement a breadth first search called Lee's algorithm as noted in \cite{adamatzky2017physical}.
        In some cases, one can observe optimal path heat or light up\cite{ayrinhac2018electron}, which is pretty
        fast essentially because Avogadro's number is so large.
        We lack any real insulation on our maze, and yet we obtain a similar gradient solving the maze--
        the invisible hand is caught in the cookie jar anyway. With the weak analogy that markets exist to diffuse value, one conjectures there may exist some analogous
        interpretation in terms of value flows. Analogies may also be made, and perhaps extended to deal with some shocks and stimuli within the control systems philosophy of
        biological and social systems, with the work of Doyle et al. \cite{doyle2011architecture}.

    \subsection{Trichotomy}
        Lets look at the image of the point mass at $0$ again in the real Fock space. Fix $M\subseteq \mathbb{C}$ compact with analytic boundary. Again let $\psi$ be $0.$
        There are three possibilities:
        \begin{enumerate}
            
            \item that $M$ winds around the origin, in which case one could take a measure supported on a continua of points,
            \item that we choose some finite collection of points from the boundary of $M,$
            \item that $0$ is in $M,$ thus we take a point mass at $0.$
        \end{enumerate}
        One can find caricatures of various ``value investing" theories in each, with the first essentially corresponding to constructing a broad index fund, the second corresponding to deep research and careful selection, the last
        being buying an index fund. Difference in strategy may be explained by difference in salience, which can be manifested as differences of $M.$ 
        In light of the invisible index theorem, they all approximate correspond to buying the invisible true index fund.

    \subsection{Topiaric portfoliation}
        Such an optimization problem arises naturally in portfolio theory,  as the median growth rate of a geometric Brownian motion is given by $\mu-\sigma^2/2$
where $\mu$ is the mean growth and $\sigma^2$ is the variance. (That is, the kernel matrix arises from some covariance matrix and the $\psi$ gives the performances.)
 Assuming the presence of risk-free assets such as cash or bonds of firm credit, one 
sees that \emph{an asset with maximum mean must be in the green frontier,} which helps avoid the chronic zig-zag-drag problem. In portfolio theory, the hedge corresponds to the Markowitz optimal portfolio
\cite{markowitz, sharpe}
for choice of risk tolerance parameter two, and other risk tolerances may be obtained by scaling the kernel. (Those correspond to the optimal portfolio keeping at least some fixed fraction in risk-free reserve, or playing with
at some fixed amount of margin.) 
Optimal portfolios with nonnegative weights are often supported on small sets, whereas those with arbitrary weights are diffuse 
\cite{green,best}, hence in practice, greedy computation
of the topiary is a low dimensional problem which is \emph{significantly easier and more robust.} Note also that large covariance matrices are often ill-conditioned, and thus the Markowitz optimal portfolio may be unknowable with much certainty.

    Given that large amounts of empirical data will contain ``lucky" and ``unlucky" outliers, one may want to mildly correct the data according to your \dfn{risk belief}. For example,
    if the return of a security exceeds its variance by more that the risk free rate, one would be incentivized to go all in on such a security, buying none of the risk free asset.
    One can correct such by either decreasing the mean, increasing the variance, or both and retain the positive semidefiniteness of the kernel. Such is fundamentally different from the Markowitzian notion
    of \dfn{risk tolerance} which is hard to reconcile with the growth of geometric Brownian motions.

\begin{problem}[Conservative data treament problems]
        Develop a theory of how solutions to quadratic programs behave when one takes $\psi$ is replaced by $\tilde{\psi}$
such that $\tilde{\psi}\leq \psi$ and the norm $\|\cdot\|_{\mathcal{H}}$ is replaced by $\|\cdot\|_{\tilde{\mathcal{H}}}$
such that $\|\cdot\|_{\mathcal{H}}\leq \|\cdot\|_{\mathcal{H}}.$
\end{problem}

    Moreover, since the optimal portfolio takes its maximum value on its support, assuming identical means, it must be almost paradoxically least correlated with the securities contained in the portfolio. 
In terms of portfolio theory, our median growth optimization is a bit different from more traditional Markowitz efficient frontier \cite{markowitz} 
or Sharpe ratio \cite{sharpe}
based methods. We view it as beneficial, although perhaps only heuristically,
that we take positive convex combinations of securities, as that avoids some extrapolatory error in terms of globally minimizing the objective over portfolios with arbitrary weights summing to $1$. From a finance point of view, it also avoids some of the fundamental problems with shorts and margin, although certainly the perspective on optimization here might shed some insight there, especially when hidden fees are taken into account.
Taking into account the theory laid out above gives reasonably fast performance. (Informally, we will comment if one does not, one will observe extremely slow convergence due to chronic zig-zag-drag.) 

Note too, that if we change the method via which return and covariance are obtained, the topiary will change.
Thus too, if the currency changes, the topiaric portfolio will change.
Moreover, what it takes for one to raise their score may be what another needs to depart for a rise. Thus, certain mutualism exists due to uneven opportunity, resolution, or information. Additionally, as the topiaric frontier is sparse, one expects there may be many locations to compete in-- the game is massively multiplayer.

        \begin{figure}[H]
    \centering
     \animategraphics[width=12cm, loop, autoplay, poster=last]{10}%frame rate
    {out}%path to figures
    {1}%start index
    {252}%end index
    \caption{An animation of our Julia-Cartheodory theorem for real securities data. Data is provided without warranty of correctness, and the labels have been anonymized, and is meant as a demonstration of the theorem. Specifically,
    it neither represents financial advice nor is it intended for trading purposes.
    However, we would encourage further empirical investigation. The green dot represents the optimal portfolio, and the (often obscured) red point is fixed, representing an instrument with fixed yield. 
    Here the excess marginal return is the aesthetic margin and the covariant risk is the covariance of each security with the portfolio. }
    \end{figure}

    %Theory of organizations, merger, aquististion, spinoff
    \subsection{Around organizational structure}
          Operations research problems that organizations must (at least implicitly) solve are analogous to portfolio optimization.
           These generally deal with how organizations, such as businesses, governments, and so on, allocate resources.

        Organizations must commit resources to unreliable sources in order to function. For example, in a foraging
        type task, one may want to visit multiple areas to look for a desired resource, but not look at areas for which there is little or no utility.

          The ``going well together" analogy works here, and informs what suborganizations can survive on their own.
          Suborganizations coming from one division may not be able to thrive.

             \begin{figure}[H]
  		\includegraphics[width=100pt]{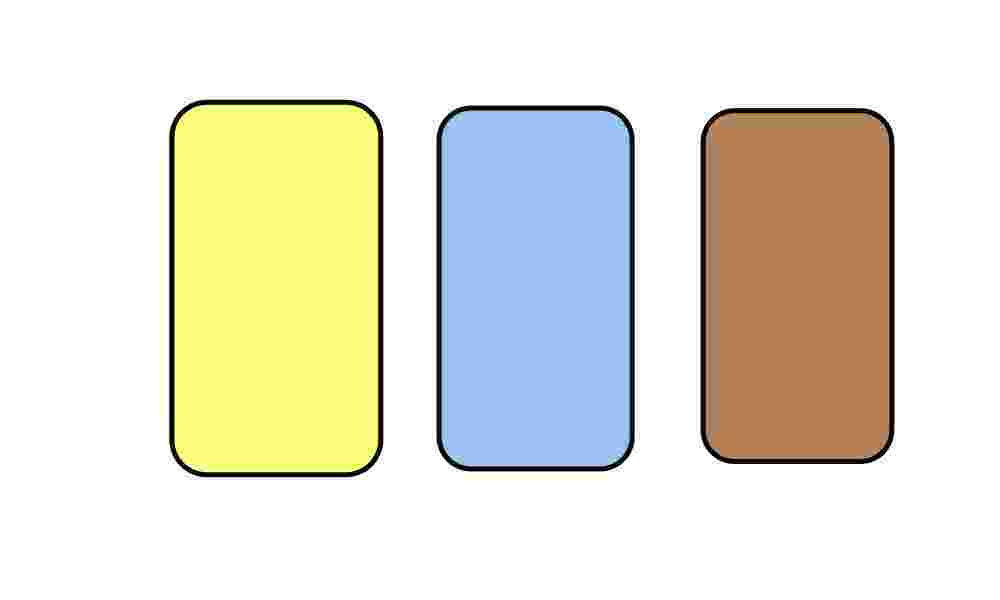}
        \includegraphics[width=100pt]{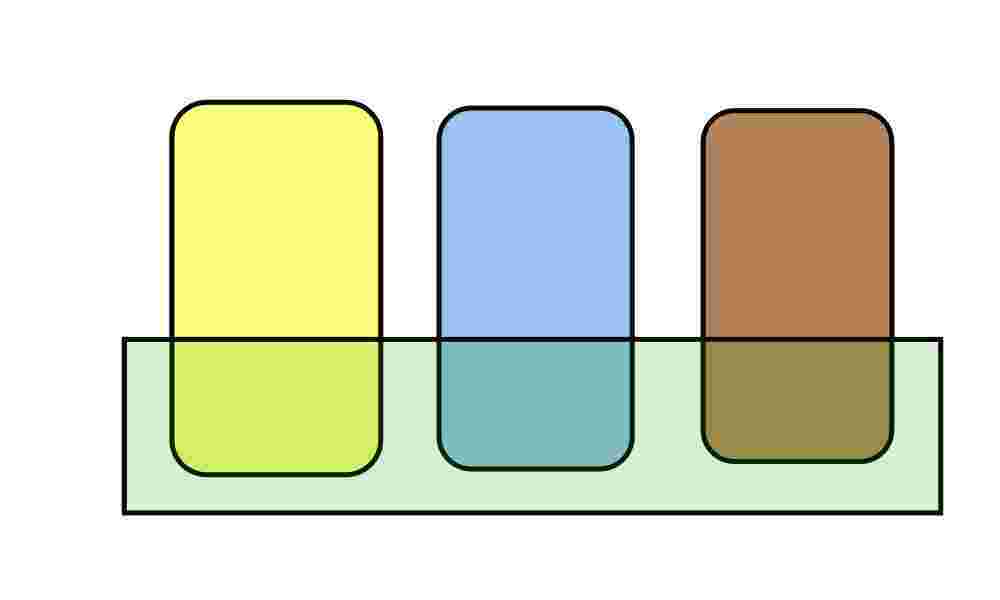}
        \includegraphics[width=100pt]{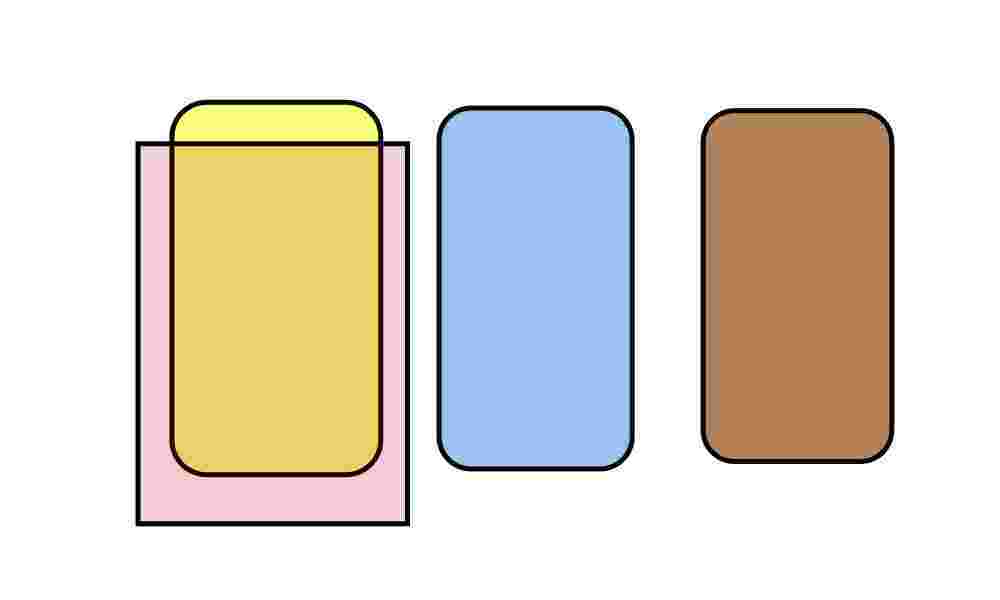}
        \caption{Lateral suborganizations can be better than subdivisions such as manufacturing, advertising and so on as they piggyback off the risk reduction of the ambient organization.
        That is, groups of parallel vertically integrated business pathways likely have more of a chance of surviving on their own.}
	    \end{figure}

        \begin{figure}[H]
  		\includegraphics[width=100pt]{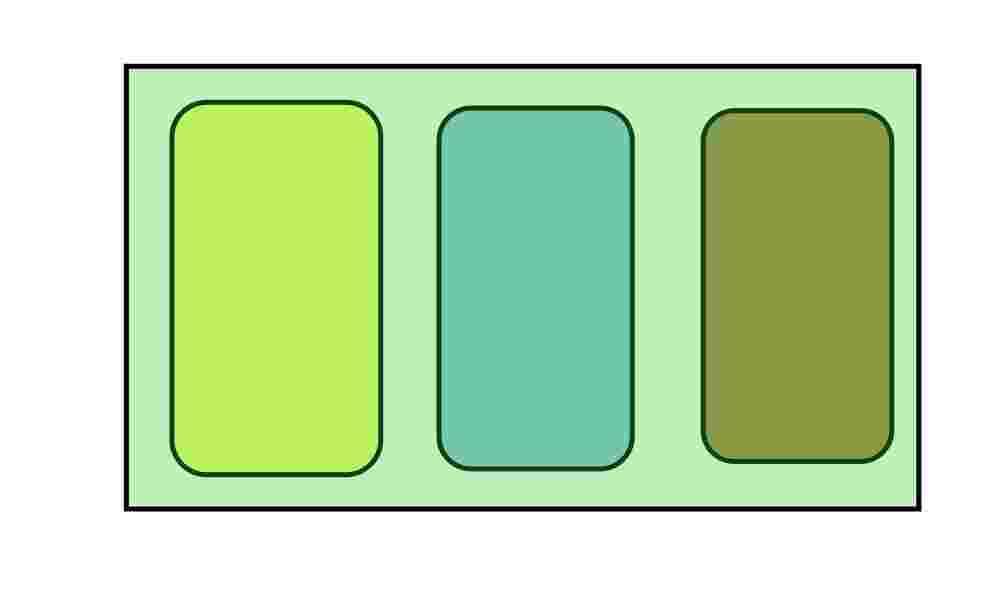}
        \includegraphics[width=100pt]{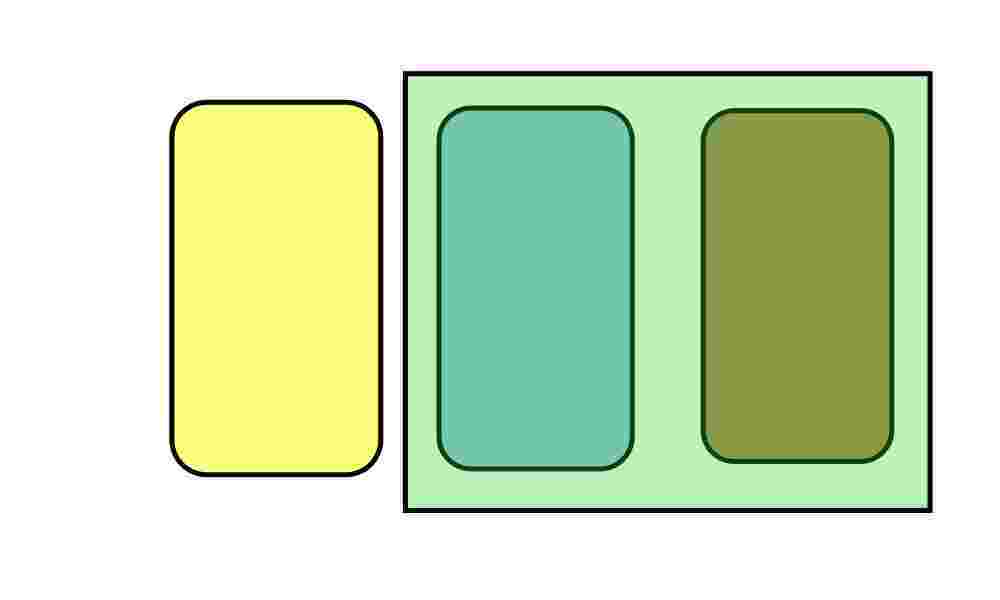}
        \includegraphics[width=100pt]{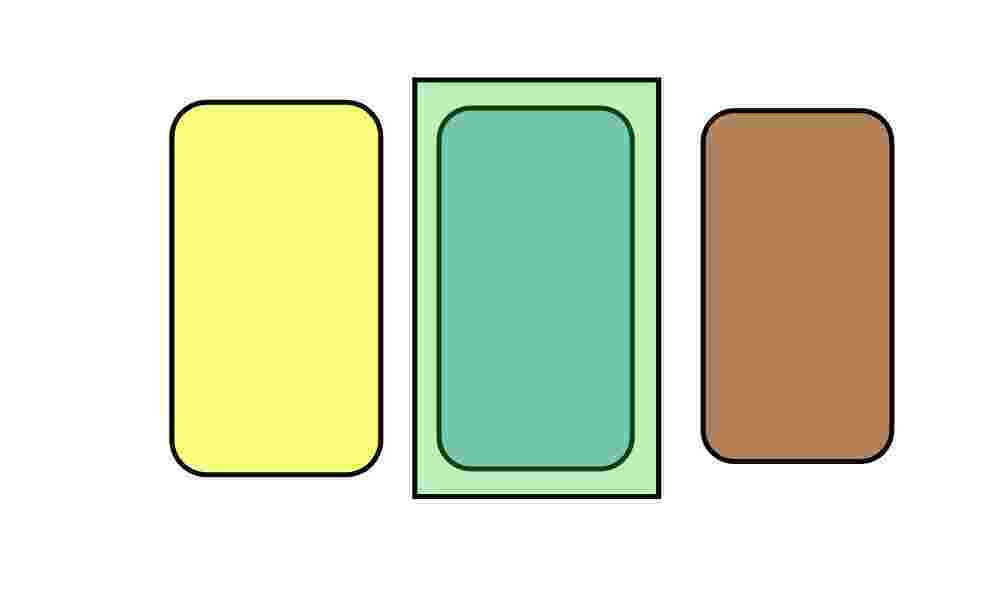}
        \caption{The no removal finite constructability theorem suggests one may posess as pathway to divest from individual parts as long as one does it in a particular order.
        Note, however, it may be necessary to do some reallocation in the remainder, and fitness still decreases as we remove parts.}
	    \end{figure}

        We see some heuristic justification for why businesses do not diversify more. For example, they end up in an organization that can not grow except by needing to cut something first,
        even if the ultimate goal would be broad diversification. Note that a business entity faces significantly more reallocation costs than an investor in liquid equities such as stocks.

    %Adaptive markets, Andrew Lo
    \subsection{The adaptive market hypothesis}
        In terms of adaptive market hypotheses \cite{andrewlo}, we see the problem as maxmimizing $$\int \psi \dd \mu - \|\mu - \nu \|^2/2$$
        where $\nu$ is a distribution on a set $K_1$ representing the present and $\mu$ is a distribution on $K_2$ representing the past.
        That is, one can optimize determinacy and return similarly, although you must optimize over two measures instead of one.
        The point being that we expect allocations which are similar to superpositions of allocations of different securities from the past under our kernel embedding to have similar returns.

    %Figure for fuzzy optimization
    %Figure(s) for organizational structure

\bibliography{references}
\bibliographystyle{plain}

%\printindex

\end{document}